\DeclareMathAlphabet{\mathpzc}{OT1}{pzc}{m}{it}
\definecolor{grassgreen}{RGB}{92,135,39}
\newtheorem{assumption}[theorem]{Assumption}
\newcommand{\hilb}{\mathscr{H}}
\newcommand{\defeq}{\vcentcolon=}
\renewcommand{\vec}[1]{{\mathchoice
                     {\mbox{\boldmath$\displaystyle{#1}$}}
                     {\mbox{\boldmath$\textstyle{#1}$}}
                     {\mbox{\boldmath$\scriptstyle{#1}$}}
                     {\mbox{\boldmath$\scriptscriptstyle{#1}$}}}}
\newcommand{\ran}{\mathsf{range}}
\newcommand{\dom}{\mathsf{dom}}
\newcommand{\trace}{\mathsf{tr}}
\newcommand{\eps}{\varepsilon}
\newcommand{\norm}[1]{\left\| {#1} \right\|}
\newcommand{\ip}[2]{{\left\langle {#1}, {#2} \right\rangle}}
\newcommand{\mip}[2]{\left\langle{#1}, {#2}\right\rangle_{\!\scriptscriptstyle{\text{M}}}}
\renewcommand{\grad}{\nabla}
\newcommand{\mat}[1]{\mathbf{{#1}}}
\newcommand\restr[2]{{
  \left.\kern-\nulldelimiterspace 
  {#1}\vphantom{\big|} \right|_{#2}}}
\newcommand{\R}{\mathbb{R}}
\newcommand{\N}{\mathbb{N}}
\newcommand{\A}{\mathcal{A}}
\newcommand{\B}{\mathcal{B}}
\newcommand{\C}{\mathcal{C}}
\newcommand{\D}{\mathcal{D}}
\newcommand{\E}{\mathcal{E}}
\newcommand{\F}{\mathcal{F}}
\newcommand{\J}{\mathcal{J}}
\newcommand{\borel}{\mathscr{B}}
\newcommand{\ave}{\mathsf{E}}
\newcommand{\var}{\mathsf{var}}
\newcommand{\GM}[2]{\mathcal{N}\!\left( {#1}, {#2}\right)}
\newcommand{\Cprior}{\mathcal{C}_{\text{pr}}}
\newcommand{\Cpost}{\mathcal{C}_{\text{post}}}
\newcommand{\ncov}{\mat{\Gamma}_{\!\text{noise}}}
\newcommand{\like}{\pi_{\text{like}}}
\newcommand{\obs}{\vec{\mathrm d}}
\newcommand{\ipar}{m}
\newcommand{\iparpr}{m_{\text{pr}}}
\newcommand{\iparmap}{m_{\scriptscriptstyle\text{MAP}}}
\newcommand{\dpar}{\vec{m}}
\newcommand{\dparmap}{\vec{m}_{\scriptscriptstyle\text{MAP}}}
\newcommand{\postcov}{\mat{\Gamma}_{\text{post}} }
\newcommand{\priorm}{\mu_{\text{pr}}}
\newcommand{\postm}{\mu_{\text{post}}^{{\obs}}}
\newcommand{\ff}{\vec{f}}                      
\newcommand{\Ndp}{n'_{\!d}}
\newcommand{\Nw}{{n_w}}
\def\addressices{Institute for Computational Engineering \& Sciences, The
  University of Texas at Austin, Austin, TX, USA}
\def\addressgeo{Department of Geological Sciences, The University of
  Texas at Austin, Austin, TX, USA}
\def\addressmech{Department of Mechanical Engineering, The
  University of Texas at Austin, Austin, TX, USA}
\newcommand{\Ns}{{n_s}}
\newcommand{\Ntr}{{n_{\text{tr}}}}
\newcommand{\Nd}{{n_{\text{d}}}}
\newcommand{\Nm}{n}
\renewcommand{\H}{\mathcal{H}}
\newcommand{\obj}{\Psi}
\newcommand{\objG}{\obj^{\scriptscriptstyle{\text{G}}}}
\newcommand{\LI}{\mathscr{L}^{{{\text{I}}}}}
\newcommand{\LOED}{\mathscr{L}^{{\scriptscriptstyle\text{O}}}}
\newcommand{\eeta}{\vec{\eta}}
\newcommand{\CM}{\mathscr{E}}
\newcommand{\eip}[2]{\left\langle{#1}, {#2}\right\rangle_{\!{\R^q}}}
\newcommand{\cip}[2]{\left\langle{#1}, {#2}\right\rangle_{\!\CM}}
\newcommand{\W}{\mat{W}}
\newcommand{\Wn}{\mat{W}_{\!\sigma}}
\newcommand{\HM}{\H_{\text{misfit}}}
\newcommand{\HMt}{\tilde{\H}_{\text{misfit}}}
\newcommand{\ad}[1]{{#1}^\ast}
\newcommand{\adfd}[1]{\vec{{#1}}^\ast}
\newcommand{\ut}[1]{\ensuremath{\tilde{#1}}}
\newcommand{\Exp}[1]{e^{{#1}}}
\newcommand{\ipart}{m_{\scriptscriptstyle\text{true}}}
\newcommand{\postmGauss}{\mu_{\text{post}}^{{\obs,\text{G}}}}
\newcommand{\commentout}[1]{\iffalse {#1} \fi}
\newcommand{\V}{\mathscr{V}_{\!\scriptscriptstyle{0}}}
\newcommand{\Vg}{\mathscr{V}_{\!g}}
\newcommand{\tab}{\,\,\,\,}
\renewcommand{\O}{\mathcal{O}}
\DeclareMathOperator*{\argmin}{arg\,min}
\newcommand{\GD}{\ensuremath{\Gamma_{\!\!D}}}
\newcommand{\GN}{\ensuremath{\Gamma_{\!\!N}}}
\newcommand{\email}[1]{\protect\href{mailto:#1}{#1}}
\begin{document}

\def\addressncsu{Department of Mathematics, North Carolina State University, Raleigh, NC, USA}

\def\addressnyu{Courant Institute of Mathematical Sciences, New York
  University, New York, NY, USA}

\def\addressucm{Applied Mathematics, School of Natural Sciences, University of California,
  Merced, CA, USA}

\author{Alen Alexanderian\footnotemark[1] \and Noemi Petra\footnotemark[5]
  \and Georg Stadler\footnotemark[6] \and Omar~Ghattas{\footnotemark[2]~\footnotemark[3]~\footnotemark[4]}}
\renewcommand{\thefootnote}{\fnsymbol{footnote}}
\footnotetext[1]{\addressices. Current address:~\addressncsu. Email: \email{alexanderian@ncsu.edu}.}
\footnotetext[2]{\addressices. \email{omar@ices.utexas.edu}.}
\footnotetext[3]{\addressmech}
\footnotetext[4]{\addressgeo}
\footnotetext[5]{\addressucm. Email: \email{npetra@ucmerced.edu}.}
\footnotetext[6]{\addressnyu. Email: \email{stadler@cims.nyu.edu}.}
\renewcommand{\thefootnote}{\arabic{footnote}}

\title{A Fast and Scalable Method for A-Optimal Design of Experiments
  for Infinite-dimensional Bayesian Nonlinear Inverse Problems}

\def \pos {0.49\columnwidth}

\maketitle

\begin{abstract}
We address the problem of optimal experimental design (OED) for
Bayesian nonlinear inverse problems governed by partial differential
equations (PDEs). The inverse problem seeks to infer an
infinite-dimensional parameter
from
experimental data observed at a set of sensor locations and from the
governing PDEs. 
The goal of the OED problem is to find an optimal placement of sensors
so as to minimize the uncertainty in the inferred parameter field.
Specifically, we seek an optimal subset of sensors from among a fixed
set of candidate sensor locations.
We formulate the OED objective function by generalizing the classical
A-optimal experimental design criterion using the expected value of
the trace of the posterior covariance. This expected value is computed
through sample averaging over the set of likely experimental
data. 
To cope with the infinite-dimensional character of the parameter field, 
we construct a Gaussian approximation to the
posterior at the maximum a posteriori probability (MAP) point, and use
the resulting covariance operator to define the OED objective
function. 
We use randomized trace estimation to compute the trace of this
covariance operator, which is defined only implicitly.
The resulting OED problem includes as constraints the system of PDEs
characterizing the MAP point, and the PDEs describing the action of
the covariance (of the Gaussian approximation to the posterior) to
vectors.
We control the sparsity of the sensor configurations using sparsifying
penalty functions.
Variational adjoint methods are used to efficiently compute the
gradient of the PDE-constrained OED objective function. 
We elaborate our OED method for the problem of determining the optimal
sensor configuration to best infer the coefficient of an elliptic
PDE. Furthermore, we provide numerical results for inference of the
log permeability field in a porous medium flow problem.
Numerical results show that the number of PDE solves required for the
evaluation of the OED objective function and its gradient is
essentially independent of both the parameter dimension and the sensor
dimension (i.e., the number of candidate sensor locations). The number
of quasi-Newton iterations for computing an OED also exhibits the same
dimension invariance properties.

\end{abstract}

\begin{keywords}
Optimal experimental design,
A-optimal design,
Bayesian inference,
sensor placement,
nonlinear inverse problems,
randomized trace estimator,
sparsified designs.
\end{keywords}

\begin{AMS}
62K05,  %
35Q62,  %
62F15,  %
35R30,  %
35Q93,  %
65C60.  %
\end{AMS}

\section{Introduction}

We address the problem of optimal design of experiments for Bayesian
nonlinear inverse problems governed by partial differential equations
(PDEs).  Our goal is to determine sensor locations, at which
experimental data are collected, in such a way that the uncertainty in
the inferred parameter field is minimized, in a sense made precise
below.  The numerical solution of a Bayesian inverse problem, which is
just a subproblem of the optimal experimental design (OED) problem, is
challenging, in particular for problems with infinite-dimensional
(high-dimensional upon discretization) parameters and
expensive-to-evaluate parameter-to-observable (forward) maps.
Computing optimal experimental designs requires repeated solution of
the underlying Bayesian inverse problem; hence, the OED problem
inherits all of the challenges of solving the Bayesian inverse
problem, which in turn inherits the computational difficulties of
solving the PDEs describing the forward problem.  These challenges
necessitate algorithms that maximally exploit the problem structure to
make OED tractable for problems that are of large scale---in the state,
parameter, and data dimensions.

\paragraph{Related work}
Standard references for OED include~\cite{Ucinski05, AtkinsonDonev92,
  Pukelsheim93,Pazman86}.  While most 
of these classical developments concern OED for inverse problems of
low parameter dimension, and consider well-posed inverse problems,
recently there has been an increased interest in OED for large-scale
problems governed by expensive-to-solve forward models.  In
particular, the authors of
\cite{HaberHoreshTenorio10,HoreshHaberTenorio10, ChungHaber12} present
numerical methods for OED for nonlinear ill-posed inverse problems
governed by large-scale models. In these papers, a frequentist point
of view is taken. In particular, the OED objective function is defined
as an empirical estimate of the Bayes risk of the point
estimator---the solution to a Tikhonov-regularized deterministic
inverse problem---for a finite-dimensional inference parameter. This
amounts to solving an optimization problem for the OED that is
constrained by first-order optimality conditions representing solution
of an inverse problem for each member of a set of training models.
There are two main differences between the work in
\cite{HaberHoreshTenorio10,HoreshHaberTenorio10} and that proposed
here. First, we address the mathematical and computational challenges
stemming from the problem of OED for {\em infinite-dimensional}
inverse problems. In particular, the choice of the prior, of the
discretization, and of the discrete inner products is such that
the discrete problems are all approximations of the same
infinite-dimensional inverse problem.
Second, in the OED objective, we explicitly
incorporate the covariance operator of (a Gaussian approximation of)
the Bayesian posterior measure, thus directly capturing the
uncertainty in the inferred parameters in the objective function. This
entails a more complex and difficult OED optimization problem, since
now it is constrained not only by the first-order optimality
conditions for the inverse problem (i.e., gradients), but also by
second-order information (i.e., Hessians). Nevertheless, we
demonstrate that we can construct scalable algorithms (those whose
cost measured in forward PDE solves is independent of problem
dimension) to solve these OED optimization problems.

Other efforts in the area
include~\cite{BauerBockKorkelEtAl00,KorkelKostinaBockEtAl04}.
In~\cite{BauerBockKorkelEtAl00}, the authors use sequential quadratic
programming (SQP) to compute optimal designs with different OED
criteria for finite-dimensional inverse problems governed by nonlinear systems
of differential--algebraic equations (DAEs).
In~\cite{KorkelKostinaBockEtAl04}, the design of robust
experiments for inverse problems governed by nonlinear DAEs is
addressed; see also the review article~\cite{BockKoerkelSchloeder13}.
While the inverse problems discussed in these papers are governed by
nonlinear DAEs, they usually have a small to moderate number of
parameters.  Another idea, mainly aimed at nonlinear inverse problems
with low to moderate parameter dimension, is that
of~\cite{HuanMarzouk13,HuanMarzouk14} in which the authors use a
generalized polynomial chaos surrogate for the forward model, and
utilize techniques of stochastic optimization to compute experimental
designs that maximize the expected information gain as measured by the
Kullback-Liebler divergence from posterior to prior.  Since no closed
form expression for the expected information gain is available for
nonlinear Bayesian inverse problems, one must resort to
computationally expensive sampling approaches.  The
paper~\cite{LongScavinoTemponeEtAl13} offers an alternate approach 
through a methodology based on a Laplace approximation, i.e., a
Gaussian approximation, of the posterior distribution to accelerate
the numerical computation of the expected information gain. 

\paragraph{Contributions}
In this work we address the OED problem for infinite-dimensional
Bayesian inverse problems,
and seek scalable algorithms for its solution.  
We retain the infinite-dimensional structure of the problem during the
development of solution methods, which not only leads to elegant mathematical
formulations but also is of practical importance: studying the problem in
infinite dimensions guides the choice of prior measures that are meaningful
for infinite-dimensional parameters and forces one to use appropriate
discretizations of the Bayesian inverse problem that avoid mesh artifacts.
Moreover, the infinite-dimensional formulation provides, via the Lagrangian
formalism, a straightforward way to derive adjoint-based expressions for
derivatives of the OED objective.  The main contributions of our work are as
follows: 
(1) We propose a method for A-optimal experimental
design for infinite-dimensional Bayesian nonlinear inverse problems; the
proposed formulation aims at minimizing the expected average posterior
variance. (2) We employ several approximations, which, when combined with
structure-exploiting algorithms, render OED for large-scale inverse problems
computationally tractable. In particular, we formulate the OED problem as a
bilevel PDE-constrained optimization problem.
(3) We use the problem of inferring a coefficient field in an elliptic
PDE to elaborate our approach for A-optimal
sensor placement. For the resulting PDE-constrained OED problem, we
derive efficient adjoint-based expressions for the gradient and assess the
computational complexity of the objective function evaluation and the
gradient computation. (4) We present a comprehensive numerical study
of the effectiveness of the OED method for optimal sensor placement
for a subsurface flow inverse problem and demonstrate scalability of
our framework in terms of the number of forward (and adjoint) PDE
solves as the parameter and sensor dimensions increase.

\paragraph{Description of the method}
Following an A-optimal design strategy, we seek to minimize the
average posterior variance of the parameter estimates, which is given
by the trace of the posterior covariance operator. For a linear
inverse problem with Gaussian prior and noise distributions, a closed
form expression for the posterior covariance operator is available and
is independent of the experimental data \cite{Tarantola05}. For
nonlinear inverse problems, however, such a closed form expression is
not available and the posterior covariance operator depends on the
experimental data. Since the data cannot be measured before the
experiment is conducted, formally this would not lead to a meaningful
OED problem. 
To cope with the dependence of the posterior covariance
$\Cpost$ on the experimental data $\obs$, we consider the average of
the trace of the posterior covariance operator over all possible
experimental data:
\begin{equation}\label{eq:intro1}
\ave_\obs \{ \trace(\Cpost(\obs) \},
\end{equation}
where $\ave_\obs$ is the expectation over data. For nonlinear inverse
problems, no closed form expressions for $\Cpost(\obs)$ are available
and the computation of $\trace(\Cpost(\obs))$ typically requires
sampling-based methods (e.g., MCMC sampling), which are particularly
expensive in high dimensions. To permit applicability to large-scale
problems, we use a Gaussian approximation of the posterior measure,
with mean given by the maximum a posteriori probability (MAP) point
$\iparmap = \iparmap(\obs)$ and covariance given by the inverse of the
Hessian operator $\H$ of the regularized data misfit functional, whose
minimizer is the MAP point. This Hessian is evaluated at the MAP
point, i.e., $\H = \H(\iparmap(\obs),\obs)$.
Notice that this approximation to the posterior is 
exact when the parameter-to-observable map is
linear. Moreover, a Gaussian is often a good approximation 
to the posterior when a nonlinear parameter-to-observable map is well 
approximated by a linearization over the 
set of parameters with significant posterior probability. 
Using this Gaussian approximation, \eqref{eq:intro1} is replaced by
\begin{equation}\label{eq:intro2}
\ave_\obs \{ \trace(\H^{-1}(\iparmap(\obs),\obs)\}.
\end{equation}
The expectation in \eqref{eq:intro2} is
approximated by averaging over a sample set
$\{\obs_1,\ldots,\obs_\Nd\}$, where each $\obs_i$ is specified
according to the noise model
\begin{equation}\label{eq:intro3}
  \obs_i=\ff(\ipar_i) + \vec\eta_i,
\end{equation}
where $\ff(\cdot)$ is the parameter-to-observable map, and $\ipar_i$
and $\vec\eta_i$ are draws from the prior and the noise distributions,
respectively.  These approximations result in a formulation of the
A-optimal design problem as a PDE-constrained optimization problem
with constraints given by the optimality conditions of the
\emph{inner} optimization problem that determines the MAP point, as
well as PDEs describing the application of the inverse of the
Hessian. 

The OED objective function involves traces of inverses of 
operators that are implicitly defined through solutions of PDEs.
We address this difficulty by using randomized trace
estimators, whose use for infinite-dimensional operators is also
addressed in this paper.
The experimental design is introduced in the Bayesian inverse problem
through a vector of non-negative weights for possible locations where
experimental data can be collected: a weight of 0 indicates absence of
a sensor, and a weight of 1 means that a sensor is placed at that
location.  To enable use of gradient-based optimization methods for an
otherwise combinatorial problem, we relax the binary assumptions on
the weights and allow them to take on any value in $[0,1]$. To control
the number of nonzero weights, and thus the number of sensors in the
experimental design, we use a sparsifying penalty
\cite{HaberHoreshTenorio08} that also favors binary weights
\cite{AlexanderianPetraStadlerEtAl14}.  Each evaluation of the OED
objective requires the solution of an inner optimization problem to find  
the MAP point (solved using an inexact Newton-CG method), and
applications of the inverse Hessian to vectors.  Gradients of the OED
objective with respect to the weights are computed efficiently using
adjoint equations, which are derived through a Lagrangian
formalism.

We elaborate the proposed OED method for the problem of inferring the log
coefficient field in an elliptic PDE. Physically this can be
interpreted as a subsurface flow problem in which we seek well
locations at which pressure data are collected so that the uncertainty
in the inferred log permeability field is minimized. 
We first consider a model problem in which we conduct a
comprehensive numerical study of the quality of the optimal
design as compared to various suboptimal 
designs. In these tests, we compare the designs by assessing their impact 
on the statistical quality of the solution of the Bayesian inverse problem. 
To this end, we compare the designs with respect to the average posterior variance as well as 
the quality of the MAP estimator which, respectively, indicate the ability of the designs to 
reduce uncertainty and to reconstruct ``truth'' log permeability fields.
These tests show that optimal designs result in
significant improvements over suboptimal designs with the same
number of sensors.  
We also examine the computational complexity, in terms of the number
of forward/adjoint PDE
solves, of the components of our method, and numerically study
its scalability. 
Finally, we compute an optimal experimental design for a larger-scale subsurface flow test problem 
with the setup and the ``truth'' log permeability field
taken from the Society of Petroleum Engineers' 10th Comparative
Solution Project (SPE10).

\section{Preliminaries}
In this section, we summarize the background material required for the
formulation and solution of OED problems for
infinite-dimensional Bayesian inverse problems.
 
\subsection{Probability measures on Hilbert spaces}\label{sec:borelmeas}
Let $\hilb$ denote an infinite-dimensional separable real Hilbert
space with inner product $\ip{\cdot\,}{\cdot}_\hilb$ and induced norm
$\|\cdot\|_{\hilb}$, and $\borel(\hilb)$ the Borel $\sigma$-algebra on
$\hilb$. A probability measure on $(\hilb, \borel(\hilb))$ is called a
Borel probability measure.
We consider a Borel probability measure $\mu$ on $\hilb$ with finite
first and second moments with mean $\bar m \in \hilb$ and covariance
operator $\C:\hilb \to \hilb$. $\C$ must be
positive, self-adjoint, and of trace-class~\cite{Prato06} and
satisfies
\begin{equation*}
\int_\hilb \|m - \bar m \|_\hilb^2 \,\mu(dm) = \trace( \C).
\end{equation*}
A Borel probability measure $\mu$ on $\hilb$ is said to be Gaussian if 
and only if for each $x \in \hilb$, the functional $u\mapsto\ip{x}{u}_\hilb
\in \R$, viewed as a real-valued random 
variable on $(\hilb, \borel(\hilb), \mu)$, is Gaussian
\cite{PratoZabczyk92,Prato06}. 
We denote by $\GM{\bar m}{\C}$ a Gaussian measure on $\hilb$ with mean
$\bar m$ and covariance
operator $\C$. 

In the present work, $\hilb = L^2(\D)$ with
the standard $L^2$-inner product $\ip{\cdot\,}{\cdot}$ and induced
norm $\|\cdot\|$,
where $\D \subset \R^d$ ($d=2,3$) is a bounded domain with
sufficiently regular boundary. 
Let $(\Omega, \Sigma, \mathsf{P})$ be a probability space and let 
$\ipar:(\Omega, \Sigma, \mathsf{P}) \to (\hilb, \borel(\hilb))$ be an $\hilb$-valued random variable 
with law $\mu$, i.e.,
$\mu(E) = \mathsf{P}(\ipar \in E), \text{ for } E \in \borel(\hilb)$. 
Notice that for each $\omega \in \Omega$, $\ipar(\cdot,\omega):\D\to\R$ is a function.
Alternatively, we may consider $\ipar$ as real-valued function defined on $\D \times \Omega$,
where for each $\vec{x} \in \D$, $\ipar(\vec{x}, \cdot)$ is
a real-valued random variable, i.e., $\ipar$ is a random field.
In this paper, we consider random fields 
that are jointly measurable on $(\hilb, \borel(\hilb)) \otimes
(\Omega, \Sigma)$ and have finite second moment.
Invoking Tonelli's theorem, the pointwise variance $\var\{\ipar(\vec{x})\}$,
$\vec{x} \in \D$, satisfies,
\begin{equation}\label{eq:avvar}
\begin{split}
\int_\D \var\{ \ipar(\vec{x}) \}\, d\vec{x} 
&= \int_\D \int_\Omega \big(\ipar(\vec{x}, \omega) - \bar\ipar(\vec{x})\big)^2 \, \mathsf{P}(d\omega)  \, d\vec{x} \\ 
&= \int_\Omega \int_\D \big(\ipar(\vec{x}, \omega) - \bar\ipar(\vec{x})\big)^2 \, d\vec{x} \, \mathsf{P}(d\omega)  \\
&= \int_\Omega \norm{ \ipar(\cdot, \omega) - \bar\ipar(\cdot) }^2 \, \mathsf{P}(d\omega)  \\
&= \int_\hilb \norm{ \ipar - \bar\ipar }^2 \, \mu(d\ipar)
= \trace(\C),
\end{split}
\end{equation}
where as before $\bar \ipar$ denotes the mean of $\ipar$.  This
shows that the trace of the covariance operator is
proportional to the average of the pointwise variance over the
physical domain $\D$---a relation that is central to our formulation
of A-optimal experimental design in an infinite-dimensional
Hilbert space.

\subsection{Bayesian inversion in an infinite-dimensional Hilbert space} \label{sec:HilbertBayes}
We consider the problem of inferring the law of the parameter $m$, modeled as an 
$\hilb$-valued random variable, from observations. Here, we describe the main 
ingredients of a Bayesian inverse problem.

\paragraph{The prior distribution law}
We use a Gaussian prior distribution law
$\priorm=\GM{\iparpr}{\Cprior}$ for the inference parameter,
where the prior mean $\iparpr$ is a
sufficiently regular element of $\hilb$ and $\Cprior:\hilb \to \hilb$
a strictly positive self-adjoint trace-class operator given by the inverse of a
differential operator.  To be precise, following
\cite{Bui-ThanhGhattasMartinEtAl13,Stuart10}, we use $\C = \A^{-2}$, where
$\A$ is a Laplacian-like operator; this choice ensures that
in two and three space dimensions, $\C$ is a trace-class operator and,
thus, the distribution is well-defined.
The measure $\priorm$ induces the Cameron-Martin space $\CM
= \ran(\Cprior^{1/2}) = \dom(\A)$ which is a dense subspace of $\hilb$
and is endowed with the inner product,
\[
   \cip{x}{y} = \ip{\A x}{\A y}, \quad x, y \in \CM.
\]
In what follows, we assume that the prior mean $\iparpr$ is an element of $\CM$.

Note that the choice of a prior that is meaningful in a function space
setting is a known challenge and an active field of research,
\cite{Stuart10,LassasSaksmanSiltanen09,DashtiHarrisStuart12,DashtiStuart15}.
Gaussian priors are a common choice for
infinite-dimensional Bayesian inverse problems.  From a practical
point of view, the use of a Gaussian prior is a modeling choice.
The prior mean describes our best guess
about the uncertain parameter, which could be obtained from existing
measurements or from other available information. The covariance operator
allows modeling of the correlation lengths and of the pointwise
variance. The choices for mean and prior might depend on the
properties that are relevant for the parameter-to-observable map. For
instance, for the subsurface flow problems considered in
sections~\ref{sec:example1} and \ref{sec:example2}, the pore-scale
rock features only influence the flow in an averaged sense. Thus,
considering smoother permeability fields that describe different
types of rocks is sufficient and an effective permeability field is
all one can hope to infer from observations.
For the prior defined above, the
Green's function of the differential operator $\A$ describes the correlation
between the parameter values at different spational points,
and so one can choose $\A$ such that it incorporates the desired
correlation information. 
We also mention the article~\cite{LindgrenRueLindstroem11}, where
a detailed study of this relation between explicitly specified %
Mat{\'e}rn-type Gaussian random fields and PDE operators
is presented.

\paragraph{The parameter-to-observable map and the data likelihood}
Next, we introduce the data likelihood, which describes the distribution of experimental 
data $\obs$ for a given parameter $\ipar \in \hilb$. Here, we consider 
finite-dimensional observations $\obs \in \R^q$, and denote by 
$\like(\obs | m)$ the likelihood probability density function (pdf).
Let $\ff: \hilb \to \R^q$ denote a \emph{parameter-to-observable map},
which is a sufficiently regular (see \cite{Stuart10})
deterministic function that maps a parameter $\ipar \in \hilb$ to an
experimental data $\obs$. In the problems we target,
an evaluation of $\ff(\ipar)$ typically requires a forward solve (typically a PDE solve) 
followed by the application of an observation operator. We consider an additive Gaussian noise
model
\begin{equation*}%
    \obs = \ff(\ipar) + \vec{\eta}, \quad \eeta \sim \GM{\vec{0}}{\ncov},
\end{equation*}
where $\ncov\in \R^{q\times q}$ is the noise covariance matrix. Note
that $\vec{\eta}$ is independent of $\ipar$ and thus $\obs | \ipar
\sim \GM{\ff(\ipar)}{\ncov}$ and the likelihood is given by
\begin{equation*} %
\like(\obs | \ipar) \propto \exp\left\{ -\frac12 \big(\ff(\ipar) - \obs\big)^T \ncov^{-1} \big(\ff(\ipar) - \obs\big)\right\}.
\end{equation*}

\paragraph{The Bayes formula in infinite dimensions}
The solution of a Bayesian inverse problem is the posterior measure,
which describes the probability law of the parameter $\ipar$
conditioned on observed data $\obs$. The relationship between the prior
measure, the data likelihood, and this posterior measure is described by
the Bayes formula, which in the infinite-dimensional Hilbert
space settings is given by \cite{Stuart10},
\[
   \frac{d\postm}{d\priorm} \propto \like(\obs | \ipar).
\]
Here, %
the left hand side is
the Radon-Nikodym derivative~\cite{Williams1991} of the posterior 
probability measure $\postm$ with respect to the prior measure $\priorm$.
See~\cite{Stuart10} for conditions on the parameter-to-observable map
$\ff$ that ensure that
the above Bayes formula holds.

\subsection{The maximum a posteriori probability (MAP) point}
\label{sec:map_point}
For a finite-dimensional inference problem, the MAP point is a
point in the parameter space at which the posterior pdf is maximized.
While this notion does not extend directly to infinite dimensions, one
can define the MAP point $\iparmap$ as the point $\ipar \in \hilb$
that maximizes the posterior probability of balls of radius $\eps$
centered at $\ipar$, as $\eps\to 0$. Analogous to the
finite-dimensional case, the MAP point can be found by minimizing the
functional $\J:\CM \to \R$  given by \cite{DashtiLawStuartEtAl13},
\[
\J(\ipar) \defeq \frac 12 \eip{\ff(\ipar) - \obs}{\ncov^{-1}(\ff(\ipar) - \obs)} +
\frac12 \cip{\ipar - \iparpr}{\ipar - \iparpr}.
\]
That is, 
\begin{equation}
\label{equ:inner-opt}
\iparmap = \argmin_{\ipar \in \CM} \J(\ipar). 
\end{equation}
The existence of solutions to the above optimization problem 
follows standard arguments~\cite{Stuart10}.
We point
out that \eqref{equ:inner-opt} is equivalent to a
deterministic inverse problem, where
inner products in the regularized data misfit functional $\J$ are weighted
according to the statistical description of the
problem, i.e., with the noise and prior covariance operators.
Note that the MAP point $\iparmap$ depends on the experimental data
$\obs$. This is a challenge in the context of OED, where data are not
available a priori.  Moreover, the solution of~\eqref{equ:inner-opt}
is not guaranteed to be unique.

\subsection{Experimental design in a Bayesian inverse problem}\label{sec:oed-basic}
Next, we define what we mean by an \emph{experimental
  design}, and describe how an experimental design enters in the
Bayesian inverse problem formulation.  We consider the problem of
optimal placement of sensors that measure experimental data.
We fix a collection of \emph{candidate sensor locations}, $\vec{x}_1,
\ldots, \vec{x}_\Ns$ in $\D$ and assign to each location a
non-negative weight $w_i$, which controls whether experimental data are
gathered at location $\vec x_i$, for $i=1,\ldots,\Ns$.
Thus, a design is fully specified by a weight
vector $\vec{w}:=(w_1,\ldots,w_\Ns) \in \R^\Ns_{\scriptscriptstyle\ge 0}$. 
Since an experimental design determines the subset of 
the set of candidate sensor locations at which data are
collected, $\vec w$ enters the Bayesian inverse problem through the data
likelihood, amounting to a weighted data likelihood:
\begin{equation} \label{equ:w-likelihood}
\like(\obs | \ipar; \vec{w}) \propto \exp\left\{ -\frac12 \big(\ff(\ipar) - \obs\big)^T 
\W^{1/2} \ncov^{-1} \W^{1/2}\big(\ff(\ipar) - \obs\big)\right\},
\end{equation}
where $\W = \diag({w_1,\ldots,w_\Ns})$.
Notice that this formulation assumes that the dimension of the data vector equals the number of
candidate sensor locations, i.e., $q = \Ns$.

Here, we consider uncorrelated observations, that is, the noise covariance
is diagonal, $\ncov = \diag(\sigma^{2}_1, \ldots, \sigma^2_\Ns)$. Thus,
\begin{equation}\label{equ:Wn}
\Wn:= \W^{1/2} \ncov^{-1} \W^{1/2} = \diag(w_1/\sigma^2_1, \ldots, w_\Ns / \sigma^2_\Ns).
\end{equation}
The solution of the Bayesian inverse problem with the weighted
likelihood \eqref{equ:w-likelihood} now additionally depends on the
design $\vec w$. For example, the MAP point (or estimator) $\iparmap$ is the
minimizer, with respect to $\ipar$, of the weighted cost functional,
\begin{equation}\label{equ:w-costJ}
\J(\ipar, \vec{w}; \obs) := \frac 12 \eip{\ff(\ipar) - \obs}{\Wn(\ff(\ipar) - \obs)} +
\frac12 \cip{\ipar - \iparpr}{\ipar - \iparpr},
\end{equation}
i.e., 
\begin{equation}
\label{equ:w-opt}
\iparmap(\vec{w}; \obs) = \argmin_{\ipar \in \CM} \J(\ipar, \vec{w}; \obs). 
\end{equation}
Other statistics of the posterior, such as the mean and the covariance
operator, also depend on $\vec w$.

In classical OED formulations~\cite{Pazman86, AtkinsonDonev92,
  Pukelsheim93, Ucinski05}, one commonly interprets the components of a
design vector $\vec{w}$ as probability masses for candidate sensor
location, i.e., $w_i \geq 0$ and $\sum w_i = 1$.  A practitioner might
place sensors at the candidate locations whose weights are large or
use the weights to decide which experiments to perform, and how often
to perform them (if experiments can be repeated)
to reduce the experimental noise level through repeated
experiments.
An alternate point of
view is to neglect the constraint $\sum w_i = 1$
and to incorporate a penalty function $P(\vec{w})$ instead, which
associates a cost to each sensor placed
\cite{AlexanderianPetraStadlerEtAl14, HaberMagnantLuceroEtAl12,
  HaberHoreshTenorio08}.
The simplest-to-interpret weight vector $\vec w$ contains
0's where no sensor is placed and 1's in locations where sensors
are placed. This leads to a binary optimization problem, which can be
challenging to solve. Thus, we relax the binary assumption on the
components of the weight vector, and allow the weights to take 
values in the interval $[0, 1]$ and enforce binary weights through properly chosen sparsifying
penalty functions, or continuation with a family of penalty functions
(see section~\ref{sec:sparsity}).

\subsection{Randomized trace estimation}\label{sec:randomized-trace-estimation}

We address
A-optimal experimental design problems, which require minimization of traces
of large dense covariance matrices that are defined implicitly
through their applications to vectors. In our OED method, we
approximate traces of covariance matrices using randomized trace
estimators.
These estimators approximate the trace of a matrix $\mat{A}
\in \R^{n \times n}$ via Monte-Carlo estimates of the form
$\trace(\mat{A}) \approx \frac{1}{\Ntr} \sum_{k = 1}^\Ntr
\ip{\vec{z}_k}{\mat{A} \vec{z}_k}_{\R^n}$, where the vectors
$\vec{z}_k$ are random $n$-vectors.
Reasonably accurate estimation of traces of high-dimensional covariance matrices
are possible with a small number of random
vectors; see e.g.,~\cite{AvronToledo11, Roosta-KhorasaniAscher13} for descriptions of 
different trace estimators and their convergence properties, 
and~\cite{AlexanderianPetraStadlerEtAl14,HaberHoreshTenorio08,HaberMagnantLuceroEtAl12}
for discussions regarding the use of randomized trace estimators for high-dimensional 
implicitly defined covariance operators.
There are several possibilities for the choice of random
vectors $\vec{z}_k$. The Hutchinson estimator~\cite{Hutchinson90} uses
random vectors with $\pm 1$ entries, each with a probability of
$1/2$. Another possibility, used in this paper, is the Gaussian trace
estimator, which uses Gaussian random vectors with independent standard
normal entries.  

In our numerical computations, we estimate traces of matrices that
are discretizations of covariance operators
defined on an infinite-dimensional Hilbert space.
Thus, we next briefly justify randomized trace estimation in infinite
dimensions. In particular, to define the 
infinite-dimensional analog of the Gaussian trace estimator, we
consider an $\hilb$-valued random variable $Z_\delta$ whose law is
given by $\mu_\delta = \GM{0}{\C_\delta}$, where $\C_\delta = (-\delta \Delta +
I)^{-2}$; here, $\Delta$ denotes the Laplacian operator with
homogeneous Neumann
boundary conditions, and $\delta$ is
a positive real number. Note that $\C_\delta$ so constructed is
positive, self-adjoint, and of trace-class on $L^2(\D)$, with $\D \subseteq \R^d$,  $d = 2, 3$.
Let $\A$ be a positive self-adjoint trace-class operator
on $\hilb$. First, note that 
\begin{equation}\label{equ:quadform}
    \ave\{{\ip{Z_\delta}{\A Z_\delta}}\} = \int_\hilb \ip{z}{\A z} \, \mu_\delta(dz)
                           = \trace(\A\C_\delta).
\end{equation}
Moreover, as shown in Appendix~\ref{apdx:trace_estimator}, %
$\trace(\A) = \lim_{\delta \to 0}  \trace(\A\C_\delta)$.
Hence, choosing
small values of $\delta$ provides reasonable estimates for
$\trace(\A)$.  Therefore, one is justified to use Monte Carlo
estimates of the form,
\[
\trace(\A) \approx \frac{1}{\Ntr} \sum_{i = 1}^{\Ntr} \ip{z_i}{\A z_i},
\]
where $z_i$ are realizations of $Z_\delta$ for a sufficiently small $\delta$ (in the finite-dimensional case, 
we can take $\delta = 0$). 

\section{A-optimal design for Bayesian \emph{linear} inverse problems}
\label{sec:linAoptimal}
The classical definition of an A-optimal design is for inverse problems
where the parameter-to-observable map $\ff$ is linear and one assumes
an additive Gaussian noise model. In this case, the
posterior covariance operator does not depend on the experimental
data.  Denoting by
$\Cpost(\vec{w})$ the covariance operator of the posterior measure
$\postm$ for a given design vector $\vec{w}$,  an A-optimal design is one that minimizes the average
posterior variance. This is equivalent to minimizing
$\trace\big(\Cpost(\vec{w})\big)$.  Denoting the linear %
parameter-to-observable map by $\mat{F}:\hilb \to \R^q$ and assuming a
Gaussian prior $\priorm=\GM{\cdot}{\Cprior}$, the posterior covariance
operator is $\Cpost(\vec{w}) = ( \mat{F}^* \Wn \mat{F} +
\Cprior^{-1})^{-1}$, with $\Wn$ as in~\eqref{equ:Wn}.
Notice that $\mat F$ is independent of the parameter $\ipar$ and
the experimental data $\obs$. Using a low rank singular value decomposition
of the prior-preconditioned parameter-to-observable map
$\mat{F}\Cprior^{1/2}$, computed \emph{once} upfront, enables
evaluating the A-optimal objective function
and its gradient without further PDE solves; see 
\cite{AlexanderianPetraStadlerEtAl14,HaberMagnantLuceroEtAl12}.

This A-optimal design approach leads to the following
optimization problem:
\[
\min_{\vec{w}  \in [0, 1]^\Ns} \trace( \Cpost(\vec{w}) ) + \upgamma P(\vec{w}),
\]
where $\upgamma P(\vec{w})$ controls the sparsity of the design
$\vec{w}$.
There are various options for choosing a sparsifying penalty function
$P(\vec{w})$. One possibility is to use $P(\vec{w}) = \sum_i w_i$,
which amounts to an $\ell^1$ penalty.  Here, we use a continuation
strategy with a sequence of penalty functions that asymptotically
approximate the $\ell^0$-``norm''; see section~\ref{sec:sparsity} and 
\cite{AlexanderianPetraStadlerEtAl14}.

\section{A-optimal design for Bayesian \emph{nonlinear}
  inverse problems}
\label{sec:oed-formulation}
In this section, we present a formulation of the A-optimal
experimental design criterion
for infinite-dimensional Bayesian nonlinear inverse problems.
To make the resulting OED problem computationally tractable, we
introduce a series of approximations, such that the formulation 
culminates in a \emph{Hessian constrained} bilevel optimization problem.

\subsection{Formulation}
For a design vector $\vec{w}$ and experimental data $\obs$, the 
Bayesian inverse problem with the weighted data likelihood
\eqref{equ:w-likelihood} is given by
\[
   \frac{d\postm}{d\priorm} \propto \like(\obs | \ipar; \vec{w}).
\]
Following an A-optimal design criterion, we seek to minimize the
average posterior variance of the inferred parameter over all
possible design vectors $\vec w$. From \eqref{eq:avvar} it follows
that the average variance is given by $\trace\left[\Cpost(\vec{w};
  \obs)\right]$, where $\Cpost$ is the covariance operator
corresponding to the posterior measure.  Note that for a fixed
experimental design vector $\vec{w}$, the result of the inference still depends on
the experimental data $\obs$. Since experimental data is, in general,
not available a priori, we average $\trace\left[\Cpost(\vec{w};
  \obs)\right]$ over the experimental data $\obs$, which, for given
$\ipar \in \hilb$, are distributed according to
$\GM{\ff(\ipar)}{\ncov}$, as specified by the data likelihood. 
Notice that this distribution of $\obs$ is conditioned on $\ipar$,
the parameter in the
Bayesian inverse problem. To address this issue, we 
rely on our prior knowledge of the parameter $\ipar$ as described by the prior measure, and
define the \emph{expected} average posterior
variance $\obj$ as follows:
\begin{equation}\label{equ:oed-objective-general}
    \obj(\vec{w}) := \ave_{\priorm}\ave_{\obs | \ipar}\left\{ \trace\left[\Cpost(\vec{w}; \obs)\right]\right\}
    \!= \!\int_\hilb\!\! \int_{\R^q} \!
       \trace\left[\Cpost(\vec{w}; \obs)\right] \, \mu_{\obs|\ipar}(d\obs) \, \priorm(d\ipar),
\end{equation}
where $\mu_{\obs|\ipar} = \GM{\ff(\ipar)}{\ncov}$.

\subsection{Gaussian approximation of the posterior measure}

If the parameter-to-observable map $\ff$ is linear, and given a
Gaussian prior distribution and an additive Gaussian noise model, the posterior is
also Gaussian, with mean and covariance given by closed form
expressions, namely the MAP point and the inverse of the Hessian of
the functional $\mathcal J$ defined in \eqref{equ:w-costJ},
respectively, \cite{Tarantola05,Stuart10}. However, if $\ff$ is
nonlinear, the posterior is not Gaussian and there
exists no closed-form expression for the posterior covariance
operator. As a consequence, one has to rely on techniques such as
Markov chain Monte Carlo sampling to compute the average posterior
variance~\cite{RobertCasella05}.
This requires a large number of statistically independent samples,
which in turn requires many evaluations of the parameter-to-observable
map $\ff$, which can make sampling computationally extremely expensive, in
particular for high-dimensional problems 
and expensive-to-evaluate parameter-to-observable maps $\ff$.
Thus, to make the problem at hand tractable, we consider a
Gaussian approximation of the posterior measure at the MAP point. That
is, given an experimental design $\vec{w}$ and a realization of the data $\obs$, we
compute the MAP point $\iparmap = \iparmap(\vec{w}; \obs)$ and define
the Gaussian approximation of $\postm$ as
\begin{equation*}%
   \postmGauss \defeq \GM{\iparmap(\vec{w}; \obs)}{\H^{-1}\big(\iparmap(\vec{w}; \obs), \vec{w}; \obs\big)},
\end{equation*}
where $\H\big(\iparmap(\vec{w}; \obs), \vec{w}; \obs\big)$ is the
Hessian  of \eqref{equ:w-costJ} (or an approximation of the Hessian,
e.g., the Gauss-Newton approximation). Note that, in general, $\H$ depends on
the design $\vec{w}$ and data $\obs$ both explicitly, and implicitly
through the MAP point.
Using this Gaussian approximation, we
proceed to define the following approximation $\objG$ for the OED objective
function $\obj$ defined in \eqref{equ:oed-objective-general}:
\begin{equation}\label{equ:oed-objective-Gaussian}
    \objG(\vec{w}) = \ave_{\priorm}\ave_{\obs | \ipar}\left\{ \trace\left[\H^{-1}\big(\iparmap(\vec{w}; \obs), \vec{w}; \obs\big) \right]\right\}.
\end{equation}

To ensure that the Gaussian approximation $\postmGauss$ is well
defined, we make the following assumption:
\begin{assumption}\label{ass1}
For every experimental data $\obs$ and every design vector $\vec w$ from the
  admissible set of designs
the inverse of the Hessian $\H^{-1}\big(\iparmap(\vec{w}; \obs),
  \vec{w};\obs\big)$ exists and is a positive trace-class operator.
\end{assumption}

\subsection{Sample averaging and randomized trace estimation}\label{subsec:sample}
The evaluation of $\objG$ given in \eqref{equ:oed-objective-Gaussian}
involves integration over an infinite-dimensional (upon
discretization, high-dimensional) space. To approximate this
integration,  we replace $\objG$ by the Monte
Carlo sum
\begin{equation}\label{eq:objGnd}
   \objG_{\!\Nd}(\vec{w}) = \frac1\Nd \sum_{i = 1}^\Nd \trace\left[\H^{-1}\big(\iparmap(\vec{w}; \obs_i), \vec{w}; \obs_i\big) \right].
\end{equation}
The data samples $\obs_i$ are given by $\obs_i = \ff(\ipar_i) + \vec{\eta}_i$, where
$\{(\ipar_i, \vec{\eta}_i)\}_{i=1}^\Nd$ is a sample set from
the product space $(\hilb, \priorm) \times (\R^q,
\GM{\vec{0}}{\ncov})$. Note that in practical computations 
usually only a moderate number of data samples can be afforded for reasons
that will become clear later in the paper.
From a frequentist's perspective, the draws
$\ipar_i$ from the prior can be considered as training models.  Note
that the draws $\obs_i$ enter in \eqref{eq:objGnd} through the
MAP point and the Hessian at the MAP point. This incorporates the
physical properties of the parameter-to-observable map $\ff$ in the
OED objective function.  For instance, if $\ff$ damps highly oscillatory 
modes of the parameters, $\objG$ is insensitive to the 
highly oscillatory modes of $\ipar_i$ used to compute $\obs_i$.  This
indirect dependence of the OED objective on ``training'' draws from the prior
is in contrast to the OED approach for nonlinear inverse problems
proposed in \cite{HaberHoreshTenorio10,HoreshHaberTenorio10}, in which
training models enter in the OED objective function directly.

The objective function \eqref{eq:objGnd} involves the trace of
$\H^{-1}_i = \H^{-1}(\iparmap(\vec{w}; \obs_i), \vec{w}; \obs_i)$. This
trace is given by $\trace[\H^{-1}_i] = \sum_{k=1}^\infty
\ip{e_k}{\H^{-1}_i e_k}$, where $\{e_k\}$ is a complete orthonormal set
in $\hilb$. Thus, we can write \eqref{eq:objGnd} as follows:
\begin{equation} \label{equ:oed-objective-mc}
   \objG_{\!\Nd}(\vec{w}) = 
   \frac1\Nd \sum_{i = 1}^\Nd \sum_{k=1}^\infty \ip{e_k}{y_{ik}},
\end{equation}
where for $i \in\{ 1, \ldots, \Nd\}$ and $ k \in \N$:
\begin{alignat*}{2}
       \iparmap(\vec{w}; \obs_i) &= \displaystyle \argmin_\ipar \J\big(\ipar, \vec{w}; \obs_i \big)&& \\
       \H\big(\iparmap(\vec{w}; \obs_i), \vec{w}; \obs_i\big) y_{ik} &= e_k. &&
\end{alignat*}
Notice that for each $i \in \{1, \ldots, \Ns\}$, we obtain a MAP point $\iparmap(\vec{w}; \obs_i)$, which is
used to define the corresponding Hessian operator $\H_i = \H\big(\iparmap(\vec{w}; \obs_i), \vec{w}; \obs_i\big)$.

The computation of the trace based on a complete orthogonal basis as
in \eqref{equ:oed-objective-mc} is not practical.
We thus use a
randomized trace estimator (see
section~\ref{sec:randomized-trace-estimation}) to obtain an expression
that can be computed efficiently. This final approximation step
results in a computationally tractable
OED objective function, which
is used in the formulation of an A-optimal experimental design problem below.

\subsection{The resulting A-optimal experimental design problem}
The definitions and approximations discussed above %
result in the following formulation of
an A-optimal design objective function for a nonlinear Bayesian
inverse problem:
\begin{align}\label{equ:psihat}
   \hat\obj(\vec w)&:=
 \frac1{\Nd\,\Ntr} \sum_{i = 1}^\Nd \sum_{k = 1}^\Ntr \ip{z_k}{y_{ik}}, 
\end{align}
where $z_k$,  $k \in \{1,\ldots,\Ntr\}$, are random vectors as discussed in section
\ref{sec:randomized-trace-estimation}, and for $i \in\{ 1, \ldots,
\Nd\}$, $y_{ik}$ is defined through
\begin{align}
   \iparmap(\vec{w}; \obs_i) &= \displaystyle \argmin_\ipar \J\big(\ipar, \vec{w}; \obs_i \big), \nonumber\\
   \H\big(\iparmap(\vec{w}; \obs_i), \vec{w}; \obs_i\big) y_{ik} &= z_k.\nonumber
\end{align}
The corresponding A-optimal experimental design optimization problem,
with a sparsifying penalty term (as discussed in
section~\ref{sec:oed-basic}) is given by
\begin{equation}\label{equ:oed-optim-problem}\tag{$\mathcal P$}
\min_{\vec{w}\in [0,1]^{\Ns}} \hat\obj(\vec w) +  \upgamma P(\vec{w}).
\end{equation}
Since we rely on gradient-based methods to solve
\eqref{equ:oed-optim-problem}, in addition to Assumption~\ref{ass1},
we require the following assumption to hold.
\begin{assumption}
The OED objective $\hat\obj(\cdot)$ is continuously differentiable
with respect to the weight vector $\vec w$ for all $\vec{w}\in
  [0,1]^{\Ns}$.
\end{assumption}

\section{OED for coefficient field inference in an elliptic PDE}\label{sec:ellipticOED}
Next, we elaborate our approach for A-optimal design of experiments 
to the inference of the log coefficient field in an elliptic partial
differential equation, i.e., we consider the forward model,
  \begin{equation}\label{equ:poi}
    \begin{split}
      -\grad \cdot (\Exp{m} \grad u) &= f \quad \text{ in }\D, \\
                    u  &= g \quad \text{ on } \GD, \\
                    \Exp{m} \grad{u} \cdot \vec{n} &= h \quad \text{ on } \GN,
    \end{split}
  \end{equation}
where $\D \subset \R^d$ ($d=2,3$) is an open bounded domain with
sufficiently smooth boundary $\Gamma = \GD \cup \GN$, $\GD \cap \GN =
\emptyset$.  Here, $u$ is the state variable, $f\in L^2(\D)$ is a
source term, and $g\in H^{1/2}(\GD)$ and $h\in L^2(\GN)$ are Dirichlet
and Neumann boundary data, respectively.  The prior distribution for
$\ipar$ ensures that, almost surely, realizations of $\ipar$ are continuous in
$\bar{\D}$ . Hence, $\Exp{m}$ is positive and bounded, ensuring
existence of a solution of \eqref{equ:poi}.
Define the spaces,
\begin{equation*}%
    \Vg = \{ v \in H^1(\D) : \restr{v}{\GD} = g\}, \quad
    \V =  \{ v \in H^1(\D) : \restr{v}{\GD} = 0\},
\end{equation*}
where $H^1(\D)$ is the Sobolev space of functions in $L^2(\D)$ with 
square integrable derivatives. 
Then, the weak form of \eqref{equ:poi} reads as follows: Find $u \in \Vg$
such that
\[
    \ip{\Exp{m} \grad{u}}{\grad{p}} = \ip{f}{p} + \ip{h}{p}_{\GN}, \quad \forall p \in \V.
\]
In the following subsections, we specialize the OED 
problem~\eqref{equ:oed-optim-problem} for the inference of $\ipar$ in \eqref{equ:poi} from pointwise
observations of the state variable $u$. For theoretical aspects
of the Bayesian approach to estimating the
coefficient field 
in elliptic PDEs we refer to~\cite{Stuart10,DashtiStuart15}.

In
sections~\ref{sec:MAP} and \ref{sec:Hessian-mat-vec},
we derive expressions for the first and second derivatives of the
``inner'' problem, i.e., the inverse problem whose solution is the MAP point.
In Section~\ref{sec:poi-oed-formulation} we formulate the OED problem as a bilevel
optimization problem, constrained by PDEs characterizing the MAP point and PDEs defining
the action of the inverse Hessian. 
Then, in section~\ref{sec:oed-adjoint-grad}, we formulate the OED
objective resulting in the ``outer'' OED optimization problem, and
derive expressions for the gradient of the OED objective using associated
adjoint equations. A discussion of the complexity of evaluating the OED objective
and its gradient, in terms of the number of forward PDE solves, is provided in section~\ref{sec:complexity}.

\subsection{Optimality system for the MAP point}\label{sec:MAP}
We first specialize the (weighted) cost
functional~\eqref{equ:w-costJ}, whose minimizer is the MAP point,
for the problem of inferring $\ipar$ in \eqref{equ:poi} from
observations $\B u$, where $\B$ is a linear
observation operator that extracts measurements from $u$:
\begin{equation}\label{equ:poi-inner-opt}
\J(\ipar, \vec{w}; \obs) = \frac12 \eip{\B u - \obs}{\Wn(\B u - \obs)} + 
  \frac{1}{2} \cip{m - \iparpr}{m-\iparpr}.
\end{equation}
Here, for a given $\ipar$, the state variable $u$ is the solution 
to~\eqref{equ:poi}, $\iparpr$ is the prior mean of the log coefficient
field, and $\obs\in \R^q$
is a given data vector.
Note that every evaluation of the OED objective function in~\eqref{equ:oed-optim-problem} with a
given design $\vec w$ requires minimization of the PDE-constrained
data misfit cost functional in~\eqref{equ:poi-inner-opt}.  Hence, in what follows, we refer to the
minimization of~\eqref{equ:poi-inner-opt} as the \emph{inner}
optimization problem.

We use the standard variational approach to derive optimality
conditions for~\eqref{equ:poi-inner-opt} with fixed design $\vec w$.
The Lagrangian functional
$\LI: \Vg \times \CM \times \V \to \R$ is given by
\begin{equation}\label{eq:model:L}
  \LI(u,m,p):= \J(\ipar, \vec{w}; \obs) 
    + \ip{\Exp{m}\grad u}{\grad p} - \ip{f}{p} - \ip{p}{h}_{\GN}.
\end{equation}
Here, $p \in \V$ is the Lagrange multiplier and we use the superscript
$I$ to emphasize that the Lagrangian corresponds to the inner
optimization problem.
The formal Lagrange multiplier
method \cite{Troltzsch10} yields that, at a minimizer of
\eqref{equ:poi-inner-opt}, variations of the Lagrangian functional
with respect to all variables vanish, which yields
\begin{subequations} 
  \begin{align}
    \ip{\Exp{m} \grad u}{\grad \ut{p}} -
    \ip{f}{\ut{p}} - \ip{\ut{p}}{h}_{\GN} & = 0,     \label{eq:firststate}\\
    \ip{\Exp{m} \grad \ut u}{\grad p} 
    +\ip{\B^*\Wn(\B u - \obs)}{\ut{u}} &= 0,         \label{eq:firstadj}\\
    \cip{m - \iparpr}{\ut{m}}
    + \ip{\ut{m} \Exp{m}\grad u}{\grad p} &= 0,       \label{eq:firstcontrol}
  \end{align}
\end{subequations}
for all variations $(\ut{u}, \ut{m}, \ut{p}) \in \V \times \CM \times \V$.
Note that~\eqref{eq:firststate},
\eqref{eq:firstadj} and \eqref{eq:firstcontrol}
are the weak forms of the state, the adjoint and the gradient equations, respectively. 
The left hand side of \eqref{eq:firstcontrol} is the gradient
for the cost functional~\eqref{equ:poi-inner-opt}, 
provided that $u$ and $p$ are solutions to the state and adjoint
equations, respectively \cite{Troltzsch10,BorziSchulz12}.

\subsection{Hessian-vector application}\label{sec:Hessian-mat-vec}
To evaluate the OED objective function \eqref{equ:psihat}, systems of
the form $\H y = z$ have to be solved, where $\H$ is the Hessian
with respect to $\ipar$ of the regularized data misfit functional
defined in~\eqref{equ:poi-inner-opt}.  Using second variations of $\LI$ defined in
\eqref{eq:model:L} allows derivation of expressions for $\H y =
z$.  For $z\in\hilb\subset \CM'$, the solution $y\in \CM$ of $\H y =
z$ is obtained by solving a coupled system of PDEs: Find $(v, q, y)
\in \V\times \V\times \CM$ such that for all $(\ut{p}, \ut{u},
\ut{y}) \in \V \times \V \times \CM$ the following equations are satisfied:
\begin{subequations}\label{eq:incrementals}
  \begin{align}
    \ip{ \Exp{m} \grad v}{\grad \ut p} + \ip{y \Exp{m}\grad u}{\grad
      \ut p} &= 0, \label{eq:incrementals1}\\
    \ip{ \B^* \Wn \B v}{\ut u} + \ip{ y \Exp{m}\grad \ut u}{\grad p}
    + \ip{\Exp{m} \grad \ut u}{\grad q} &= 0,\label{eq:incrementals2}\\
 \ip{\ut{y} \Exp{m} \grad v}{\grad p} 
  +  \cip{y}{\ut{y}} + \ip{\ut{y} y \Exp{m} \grad u}{\grad p}
  +  \ip{ \ut{y} \Exp{m} \grad u}{\grad q} &= \ip{z}{\tilde{y}}. \label{eq:incrementals3}
  \end{align}
\end{subequations}
The equations \eqref{eq:incrementals1} and \eqref{eq:incrementals2}
are sometimes called incremental state and adjoint equations,
respectively, and the left hand side in \eqref{eq:incrementals3}
describes the application of the Hessian to a vector $y$.
In practice, $\H y = z$ is solved iteratively
using a Krylov method, which requires only the application of $\H$ to
vectors. This application can be computed by first solving
\eqref{eq:incrementals1} for $v$, then solving
\eqref{eq:incrementals2} for $q$, and then using these solutions in
\eqref{eq:incrementals3}.  Next, we provide explicit expressions for the OED problem for the
inference of the log coefficient field in \eqref{equ:poi}.

\subsection{The OED problem as a PDE-constrained optimization problem}\label{sec:poi-oed-formulation} 
Specializing the A-optimal experimental design
problem~\eqref{equ:oed-optim-problem} for the problem of inference of
the log coefficient field in \eqref{equ:poi} we obtain,
\begin{subequations}\label{equ:OEDpoi}
  \begin{align}
    &\min_{\vec{w}\in [0, 1]^\Ns} \, \frac{1}{\Nd \Ntr} \sum_{i=1}^\Nd \sum_{k = 1}^\Ntr \ip{z_k}{y_{ik}} + \upgamma P(\vec{w}) \label{equ:outeropt}
  \end{align}
where for $ i = 1, \ldots, \Nd$ and $ k = 1, \ldots, \Ntr \nonumber$
  \begin{align}
    \ip{\Exp{m_i} \grad u_i}{\grad \ut{p}} - \ip{f}{\ut{p}} - \ip{\ut{p}}{h}_{\GN}      &=0,   &&\forall \ut{p} \in \V, \label{equ:state}\\
    \ip{\Exp{m_i} \grad \ut u}{\grad p_i} +\ip{\B^*\Wn(\B u_i - \obs_i)}{\ut{u}}           &=0,   &&\forall \ut{u} \in \V,   \label{equ:adjoint}\\
    \!\cip{m_i - \iparpr}{\ut{m}} + \ip{\ut{m} \Exp{m_i}\grad u_i}{\grad p_i}              &=0,   &&\forall \ut{m} \in \!\CM,  \label{equ:grad}\\
    \ip{\B^* \Wn \B v_{ik}}{\ut u} + \ip{ y_{ik} \Exp{m_i}\grad \ut u}{\grad p_i}+\ip{\Exp{m_i}  \grad \ut u}{\grad q_{ik}} &= 0,   &&\forall \ut u \in \V,     \label{equ:incadjoint}\\
    \ip{\ut{y} \Exp{m_i} \grad v_{ik}}{\grad p_i} +  \cip{y_{ik}}{\ut{y}} + \ip{\ut{y} y_{ik} \Exp{m_i} \grad u_i}{\grad p_i} \nonumber \\
    \tab\tab+\ip{ \ut{y} \Exp{m_i} \grad u_i}{\grad q_{ik}}                                        &= \ip{z_k}{\tilde{y}},  &&\forall \ut y \in \CM,\label{equ:incgrad}\\
    \ip{\Exp{m_i} \grad v_{ik}}{\grad \ut p} + \ip{y_{ik} \Exp{m_i}\grad u_i}{\grad \ut p}   &=0,   &&\forall \ut p \in \V.    \label{equ:incstate}
    \end{align}
\end{subequations}
The PDE constraints \eqref{equ:state}--\eqref{equ:grad} are
the optimality system~\eqref{eq:firststate}--\eqref{eq:firstcontrol}
characterizing the MAP point $\ipar_i = \iparmap(\vec{w}; \obs_i)$. The equations 
\eqref{equ:incadjoint}--\eqref{equ:incstate} are the PDE constraints 
that describe $\H\big(\iparmap(\vec{w}; \obs_i), \vec{w}; \obs_i\big)
y_{ik} = z_k$ for $z_k \in \hilb$. 
Note also that compared to \eqref{eq:incrementals}, we have re-ordered the
equations~\eqref{equ:incadjoint}--\eqref{equ:incstate}. While
\eqref{eq:incrementals} follows the order in which the Hessian
application is computed in practice, the order in
\eqref{equ:incadjoint}--\eqref{equ:incstate} is such that the linear
(block-)operator on the left hand side is symmetric.
In summary,
\eqref{equ:OEDpoi} is a PDE-constrained optimization problem, where
the constraints are the first-order optimality conditions of a
PDE-constrained inverse problem, and a set of PDEs
describing the application of the inverse Hessian to vectors. 

\subsection{Evaluation and gradient computation of the OED objective}\label{sec:oed-adjoint-grad}
Evaluating the OED objective function in \eqref{equ:OEDpoi}
involves the following steps: (1)
find $(u_i, \ipar_i, p_i)$ that satisfy
equations~\eqref{equ:state}--\eqref{equ:grad} and (2) find
$(v_{ik}, y_{ik}, q_{ik})$ that
satisfy~\eqref{equ:incadjoint}--\eqref{equ:incstate}, for $i \in \{1,
\ldots, \Nd\}$ and $k \in \{1, \ldots, \Ntr\}$.

To solve the optimization problem \eqref{equ:OEDpoi}, we rely on
gradient-based optimization methods. 
Thus we need efficient methods for computing the gradient 
of $\hat \Psi$ with respect to the design vector $\vec{w}$. 
Again we follow a Lagrangian 
approach, and employ adjoint variables (i.e., Lagrange 
multipliers) to enforce the PDE 
constraints~\eqref{equ:state}--\eqref{equ:incstate} in 
the OED problem. The derivation of expressions for the gradient is rather 
involved, and it deferred to Appendix~\ref{appdx:oed-gradient}. Below, 
we simply present the final expression for the gradient, which takes the form:
\begin{equation}\label{equ:oed-grad}
    \hat\obj'(\vec{w})\! =\! \sum_{i = 1}^\Nd \ncov^{-1}(\B u_i - \obs_i) \odot
    \B\ad{p}_i - \frac{1}{\Nd\Ntr} \sum_{i = 1}^\Nd \sum_{k = 1}^\Ntr
    \ncov^{-1}\B v_{ik} \odot \B v_{ik} 
\end{equation}
where $u_i, v_{ik}$ are available from the evaluation of $\hat\obj$ as
described above, $\odot$ denotes the Hadamard product,\footnote{For
  vectors $\vec{x}$ and $\vec{y}$ in $\R^n$, the Hadamard product,
  $\vec{x} \odot \vec{y}$, is a vector in $\R^n$ with components
  $(\vec{x} \odot \vec{y})_i = x_i y_i$, $i = 1, \ldots, n$.} and,
for $i \in \{1, \ldots, \Nd\}$, the
$\ad p_i$  are obtained by solving the
following systems for the \emph{OED adjoint variables} $(\ad p_i, \ad m_i, \ad
u_i)\in \V\times \CM\times \V $:
\begin{subequations}\label{equ:outer-adj}
\begin{align}
               \!\!\ip{\B^*\Wn\B\ad p_i}{\ut u}
               \!+\! \ip{\ad{m}_i \Exp{m_i}\grad \ut{u}}{\grad p_i}
               \!+\! \ip{\Exp{m_i} \grad \ut{u}}{\grad \ad{u}_i}
               &\!=\! \ip{b^1_i}{\ut u},
   \label{equ:outer-adj4-simp}
   \\ 
               \!\!\!\ip{\ut{m}\Exp{m_i}\grad{p}_i}{\grad\ad{p}_i}
               \!+\! \cip{\ad{m}_i}{\ut{m}} \!+\! \ip{\ut m \ad{m}_i \Exp{m_i}\grad u_i}{\grad p_i}
               \!+\! \ip{\ut{m}\Exp{m_i}\grad u_i}{\grad\ad{u}_i}
               &\!=\!\ip{b_i^2}{\ut m}, 
   \label{equ:outer-adj5-simp}
   \\ 
               \!\!\ip{\Exp{m_i} \grad \ut{p}}{\grad \ad{p}_i}
               \!+\! \ip{\ad{m}_i \Exp{m_i}\grad u_i}{\grad \ut{p}}
               &\!=\!\ip{b_i^3}{\ut p},  
   \label{equ:outer-adj6-simp}
\end{align}
\end{subequations}
for all $(\ut u, \ut m, \ut p) \in \V \times \CM \times \V$, with the right hand sides given by
\begin{equation}\label{equ:rhs-nice}
\begin{aligned}
  \ip{b^1_i}{\ut u} &= \frac{1}{\Nd\Ntr}\sum_{k = 1}^\Ntr \big[2 \ip{y_{ik}\Exp{m_i}\grad\ut{u}}{\grad q_{ik}} + (y_{ik}^2 \Exp{m_i} \grad \ut u, \grad p_i)\big], 
\\
  \ip{b^2_i}{\ut m} &= 
   \frac{1}{\Nd\Ntr} \sum_{k = 1}^\Ntr \big[2 \ip{ \ut{m_i}  \Exp{m_i} \grad v}{\grad q} 
  + 2 \ip{\ut m y_{ik} \Exp{m_i} \grad u_i}{\grad q_{ik}}\\
  &\qquad\qquad\qquad+ 2 \ip{\ut m y_{ik} \Exp{m_i} \grad v_{ik}}{\grad p_i}
  + \ip{\ut m y_{ik}^2 \Exp{m_i} \grad u_i}{\grad p_i}\big], 
\\
  \ip{b^3_i}{\ut p} &= \frac{1}{\Ntr\Nd} \sum_{k = 1}^\Ntr \big[ 2\ip{y_{ik} \Exp{m_i}\grad\ut{p}}{\grad v_{ik}} + \ip{y_{ik}^2 \Exp{m_i} \grad u_i}{\grad \ut p}\big].
\end{aligned}
\end{equation}
Note that the linear operator on the left hand side of
\eqref{equ:outer-adj} coincides with the left hand side operator
in~\eqref{equ:incadjoint}--\eqref{equ:incstate}, after proper identification
of variables.
The fact that the
system for the OED adjoint variables coincides with the system
describing the Hessian of the inner optimization problem, i.e., the
Hessian of $\mathcal J$, defined in \eqref{equ:poi-inner-opt},  
with respect to $\ipar$ can be exploited in numerical computations. In particular,
if a Newton solver for the inner optimization problem is
available, the implementation can easily be adapted to perform the
computations required to evaluate the OED objective, and to compute
the OED gradient.
We summarize the steps for computing the OED objective function and its gradient
in Algorithm~\ref{alg:aopt}.
\renewcommand{\algorithmicrequire}{\textbf{Input:}}
\renewcommand{\algorithmicensure}{\textbf{Output:}}
\begin{algorithm}[ht]
\caption{Algorithm for computing $\hat\obj(\vec w)$ and its gradient
  $\hat\obj'(\vec{w})$.}
\begin{algorithmic}[1]\label{alg:aopt}
\REQUIRE Design vector $\vec{w}$, trace estimator vectors $\{z_k\}_1^\Ntr$, data samples $\{\obs_i\}_1^\Nd$
\ENSURE  $\hat\obj = \hat\obj(\vec{w})$ and $\hat\obj' = \hat\obj'(\vec{w})$
\STATE Initialize $\hat \obj = 0$ and $\hat\obj' = 0$
\FOR {$i = 1$ to $\Nd$}
\STATE \texttt{/* Evaluation of the objective function */}
\STATE Compute $\iparmap(\vec{w}; \obs_i)$ 
\hfill \COMMENT{The inner optimization (inverse) problem}
\FOR{$k = 1$ to $\Ntr$}
      \STATE Solve $\H_i y_{ik} = z_k$    
\hfill \COMMENT{$\H_i = \H\big(\iparmap(\vec{w}; \obs_i)$}
\ENDFOR
\STATE $\hat \obj \gets \hat\obj + \frac{1}{\Nd\Ntr}\sum_{k=1}^\Ntr \ip{z_k}{y_{ik}}$  
\STATE \texttt{/* Evaluation of the gradient */}
\FOR{$k = 1$ to $\Ntr$}
      \STATE Compute ${v}_{ik}$ and $q_{ik}$ \hfill\COMMENT{Equations~\eqref{equ:incstate} and~\eqref{equ:incadjoint}}
\ENDFOR
\STATE Solve $\H_i\ad{m}_i = \bar{b}_i$
\hfill\COMMENT{$\bar{b}_i$ obtained from~\eqref{equ:rhs-nice} (see Appendix~\ref{appdx:discretization} for details)}
\STATE Compute $\ad{p}_i$ \hfill\COMMENT{Equation~\eqref{equ:outer-adj6-simp}}
\STATE Compute $\hat\obj' \gets \hat\obj' + \ncov^{-1} (\B u_i - \obs_i) \odot \B\ad{p}_i - \frac{1}{\Nd\Ntr}\sum_{k = 1}^\Ntr \ncov^{-1} \B v_{ik} \odot \B v_{ik}$
\ENDFOR
\end{algorithmic}
\end{algorithm}

\subsection{Scalability of the OED solver}

\label{sec:complexity} 

Here, we provide a discussion of the computational complexity and
resulting scalability of solving the OED problem
\eqref{equ:OEDpoi}. Although this discussion is qualitative in nature,
we do provide numerical evidence of the scalability of our OED solver
in section \ref{sec:example1}. The cost of solving the OED problem in
measured in terms of the number of required forward-like PDE solves,
i.e., solves of \eqref{equ:poi}, or its adjoint or incremental
variants. We measure cost in this way to remain agnostic to the specific
governing forward PDEs and the particular PDE solver employed. These
forward-like PDE solves constitute the kernel component of the OED
optimization solver, and for any non-trivial PDE forward problem, the
PDE solves overwhelmingly dominate the overall cost; the remaining
linear algebra is negligible in comparison.  Having defined cost in
this manner, scalability then requires that the number of forward-like
PDE solves is independent of problem dimensions, which for the OED
problem \eqref{equ:OEDpoi} are the (discretized) parameter dimension
and the sensor dimension $\Ns$ (the state dimension is hidden within
the forward-like PDE solver).

To assert scalability of the OED solver, we have to argue that (1) the
evaluation of the OED objective $\hat\obj$, (2) the evaluation of the
gradient of the OED objective $\hat\obj'$, and (3) the number of OED
optimization iterations are all independent of the parameter and
sensor dimensions. To make this argument, we begin by identifying a
property of the Hessian systems that are solved at each OED
optimization iteration. These Hessian systems include those arising at
each iteration of the inner optimization problem (i.e., minimizing
$\mathcal{J}$ in \eqref{equ:poi-inner-opt}), as well as the Hessian
solves characterizing the posterior covariance in the OED objective
evaluation (\eqref{equ:incadjoint}--\eqref{equ:incstate}) and those
arising in OED gradient computation \eqref{equ:outer-adj}.  Consider
the Hessian $\H$ evaluated at the MAP point and notice that $\H$ can
be written as $\H = \HM + \Cprior^{-1}$, with $\HM$ representing the
Hessian of the first term (i.e., the data misfit term) in $\J$ defined
in~\eqref{equ:w-costJ}.  As discussed
in~\cite{FlathWilcoxAkcelikEtAl11, Bui-ThanhGhattasMartinEtAl13}, the
numerical rank $r$ of the prior-preconditioned data misfit Hessian, $\HMt
= \Cprior^{1/2} \HM \Cprior^{1/2}$, is independent of the parameter
dimension and, for many inverse problems, small. Moreover, the rank is
independent of the sensor dimension as well. This parameter/sensor
dimension-independence of $r$ reflects the fact that (1) the data are
often finite-dimensional, (2) the parameter-to-observable map is often
smoothing, and (3) the prior covariance operator is of smoothing
type. The numerical rank $r$ depends on the
parameter-to-observable map, the smoothing properties of the prior,
and the true information content of the data. The rank grows initially
with parameter and sensor dimensions until all information contained
in the data about the parameters has been resolved. Beyond this, the
rank $r$ of $\HMt$ is insensitive to further increases in parameter
and sensor dimension (e.g., through mesh refinement).

Next, we analyze the computational cost (again, measured in
forward-like PDE solves) of evaluation of the OED objective function
and its gradient as detailed in Algorithm~\ref{alg:aopt}.
We rely on inexact Newton-CG with Armijo line search to solve the
inner optimization problems in step 4 of the algorithm. The
computational cost of each Newton step is dominated by the
conjugate gradient iterations.
  Using the prior covariance as a
  preconditioner for CG, the number of CG iterations will be $\O(r)$ 
  (see \cite{CampbellIpsenKelleyEtAl96} for mesh invariance properties
  of CG for operators that are compact perturbations of the
  identity). Each CG iteration involves an application of the data
  misfit Hessian, which in turn involves a pair of incremental
  forward/adjoint PDE solves; therefore, the cost, in terms of
  forward-like PDE solves, of each inner optimization problem is
  $\O(n_\text{newton} \times 2 \times r)$, where $n_\text{newton}$ is
  the total number of Newton iterations.  Note that here we do not
  take into account the inexactness of Newton-CG. If in earlier
  iterations the Newton system is solved only approximately,
  $n_\text{newton}$ can be replaced with a smaller number.
Next, we note that for each data sample $\obs_i$, $i = 1 \ldots \Nd$,
we perform $\Ntr$ Hessian solves in steps 5--7 of the algorithm, where
we solve for $y_{ik}$, $k = 1, \ldots, \Ntr$. Thus, since we use CG to
solve these systems, it follows that the computational cost, measured
in forward-like PDE solves, of evaluating the OED objective function is
\begin{equation}\label{equ:OED-obj-cost}
    \O(\Nd \times n_\text{newton} \times 2 \times r) + \O(\Nd \times \Ntr \times 2 \times r).
\end{equation}
Note also that by the mesh invariance properties of the Newton 
method for nonlinear optimization~\cite{Deuflhard04}, $n_\text{newton}$ 
is independent of the parameter dimension. 

To compute the gradient, we need to perform the computations in step
11, which entail $2 \times \Nd \times \Ntr$ PDE solves, as well as the
Hessian solves in step 13 of the Algorithm~\ref{alg:aopt}, whose cost
is $\O(\Nd \times 2 \times r)$ PDE solves.  Thus, the cost of
evaluating the OED gradient is
\[
2 \times \Ntr \times \Nd + \O(\Nd \times 2 \times r)
\]
forward-like PDE solves. 

Observe that step 6 of Algorithm~\ref{alg:aopt} involves $\Ntr$
systems with the same Hessian operator and different right hand sides.
Thus, it is possible to further reduce the complexity of the
algorithm. For instance, precomputing a low rank approximation of the
prior-preconditioned data misfit Hessian $\HMt$ (after solving the
inner optimization problem) provides an efficient method for
applications of the inverse Hessian that is free of PDE solves
\cite{Bui-ThanhGhattasMartinEtAl13,FlathWilcoxAkcelikEtAl11}. Using
this low rank approximation of $\HMt$ allows us to remove the factor
$\Ntr$ in the second term of~\eqref{equ:OED-obj-cost}. 

The final argument to make is that the number of OED optimization
iterations is parameter/sensor dimension-independent. If one solves
the OED problem \eqref{equ:OEDpoi} using a Newton method, we would
expect this to be the case. In the example of section
\ref{sec:example1}, we employ a quasi-Newton method. It is difficult
to make a dimension-independence argument for quasi-Newton for the OED
problem; however, in that section we do observe dimension independence of
OED optimization iterations.

\subsection{Sparsity control}\label{sec:sparsity}

Here we briefly comment on the sparsity enforcing penalty method used
in the present work, which is based on the approach
in~\cite{AlexanderianPetraStadlerEtAl14}. In particular, considering
the problem~\eqref{equ:oed-optim-problem}, we first solve the problem
with $P(\vec{w}) = \vec{1}^T\vec{w}$, amounting to an $\ell^1$ penalty
to obtain the minimizer $\vec{w}^*_0$.  Subsequently, we consider a
sequence of penalty functions, $P_\eps(\vec{w})$ such that as $\eps
\to 0$, $P_\eps$ approaches the $\ell^0$ norm.  To cope with the
non-convexity of these penalty functions, we follow a continuation
strategy, i.e., we decrease $\{\eps_i\}$: For $\eps_1$, we
solve~\eqref{equ:oed-optim-problem} with penalty function $P_{\eps_1}$
and the initial guess (for the optimization algorithm) given by the
$\vec{w}^*_0$.  Subsequently, for each $i \geq 2$ the problem is
solved with $P_{\eps_i}$ as the penalty function and the initial guess
given by the solution of the proceeding optimization problem
corresponding to $\eps_{i-1}$.  The precise definition of the penalty
functions $P_\eps$ used follows
\cite{AlexanderianPetraStadlerEtAl14}. In practice, we observe that a
few continuation iterations are sufficient to attain an optimal weight
vector with a 0/1 structure.

\section{Example 1: Idealized subsurface flow}\label{sec:example1}
In this section, we study the effectiveness of our OED approach
applied to the parameter estimation problem considered in
section~\ref{sec:ellipticOED}. We interpret \eqref{equ:poi} as
subsurface flow problem and thus refer to $u$ as pressure and to 
$m$ as log permeability.

\subsection{Setup of forward problem}\label{sec:prob1_forward}
To detail the forward problem~\eqref{equ:poi},
we consider the domain $\D := (0, 1) \times (0,
1)\subset\mathbb R^2$ and no volume forcing, i.e., $f = 0$. We assume
no-outflow conditions on 
$\GN:=\{0,1\}\times (0,1)$, i.e.,
the homogeneous Neumann conditions $\Exp{\ipar} \grad u \cdot \vec{n} = 0$
on $\GN$.  The flow is driven by a pressure difference between
the top and the bottom boundary, i.e., we use $u = 1$ on 
$(0,1)\times \{1\}$ and $u = 0$ on
$(0,1)\times \{0\}$. This Dirichlet part of the boundary is denoted by
$\GD:=(0,1)\times\{0,1\}$.
In Figure~\ref{fig:forwardprob}, we show the
``truth'' permeability used in our numerical tests, the corresponding
pressure and the Darcy velocity field. 
\begin{figure}[th]\centering
\begin{tikzpicture}
  \node (1) at (0*\pos, 0*\pos){\includegraphics[height=.26\textwidth, trim=0 0 0
      0, clip=true]{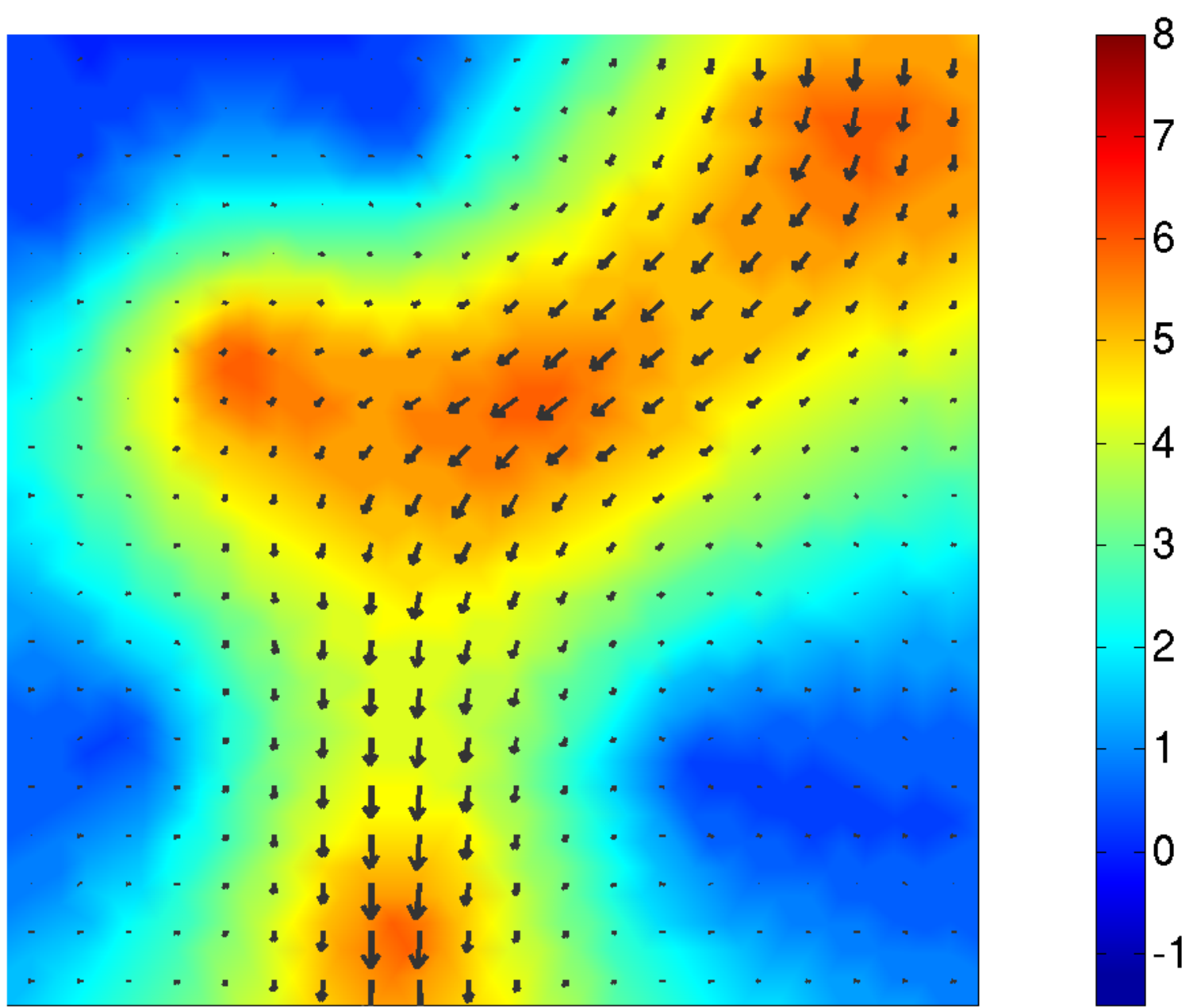}};
  \node (2) at (0.7*\pos, 0*\pos){\includegraphics[height=.26\textwidth, trim=0 0 0
      0, clip=true]{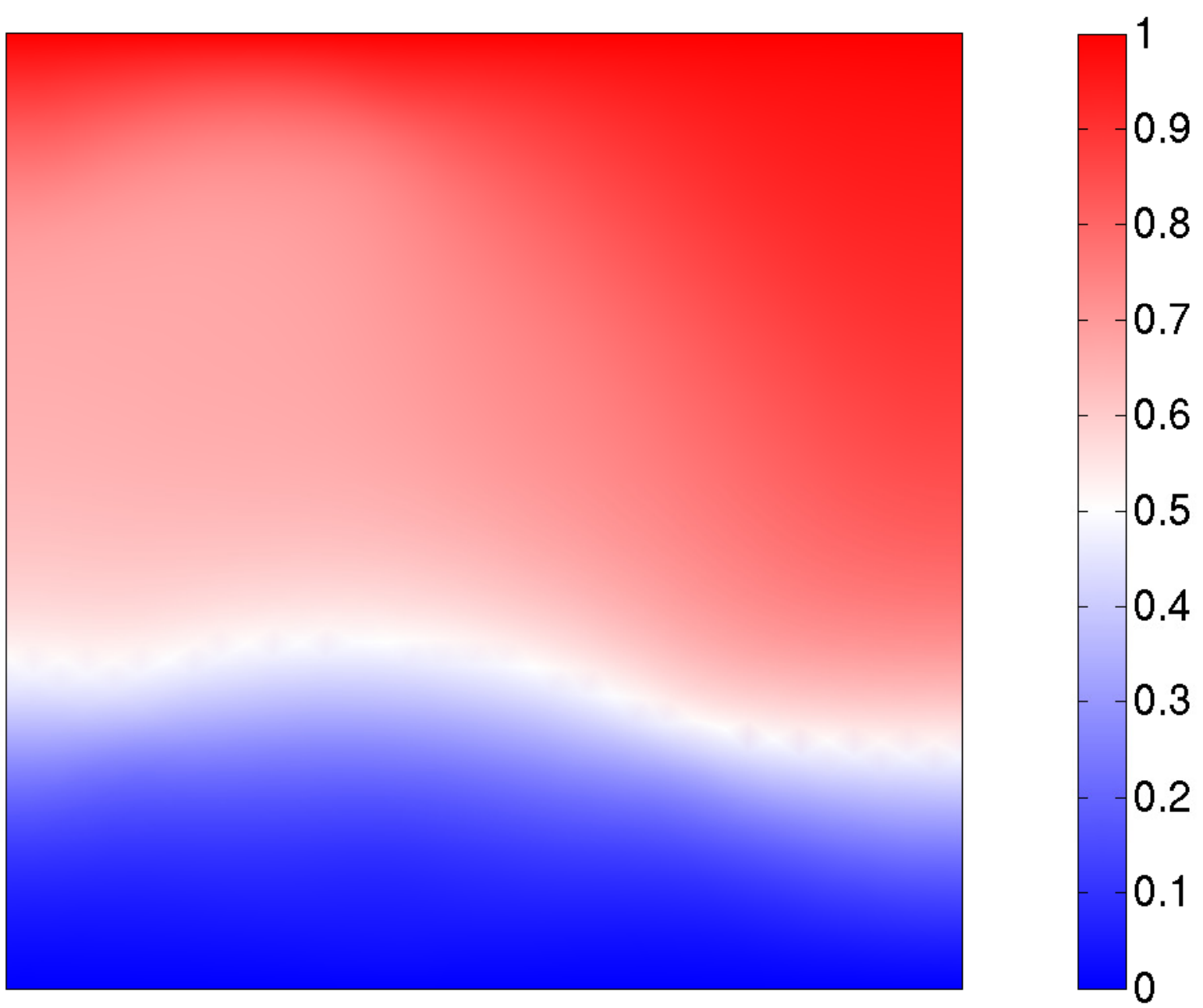}};
  \node at (0.0*\pos-0.26*\pos, -0.22*\pos) {\sf a)}; 
  \node at (0.7*\pos-0.26*\pos, -0.22*\pos) {\sf b)}; 
\end{tikzpicture}
\caption{The color in (a) shows the log permeability field $\ipart$, and
the arrows depict the Darcy velocity field, $\vec{q} = -\frac{\Exp{\ipar}}{\eta}\grad u$,
where $\ipar=\ipart$ and $\eta$ is the viscosity, here assumed to be $\eta = 1$.
The pressure field $u$ obtained by solving the state equation
with $\ipart$ is shown in (b).}
\label{fig:forwardprob}
\end{figure}

\subsection{Prior and noise model}\label{sec:prob1_prior_and_noise}
We assume given estimates $\ipart^1,\ldots,\ipart^5$ of the
log permeability at five points, i.e., $N = 5$, in $\D$, namely $\vec{x}_1 = (0.1,0.1)$,
$\vec{x}_2 = (0.1,0.9)$, $\vec{x}_3 = (0.9,0.1)$, $\vec{x}_4 =
(0.9,0.9)$, and $\vec{x}_5 = (0.5,0.5)$.
Based on this knowledge, we
compute $\iparpr$, the mean of the prior measure, as a regularized
least-squares fit of these point observations by solving
\begin{equation}\label{equ:prior_mean_prob}
    \iparpr = \argmin_{\ipar \in \CM}
\frac12 \ip{\ipar}{\A \ipar} + \frac\alpha2 \sum_{i = 1}^N 
       \int_\D 
       \delta_i(\vec{x}) \big[\ipar(\vec{x}) - \ipart(\vec{x})\big]^2 \, d\vec{x}.
\end{equation}
Here, %
$\A[\ipar]= -\grad \cdot (\mat{\Theta} \grad \ipar)$,
where the positive definite matrix $\mat\Theta$ allows to control the
prior covariance.
We define the prior covariance as $\Cprior := \mathcal{L}^{-2}$, where
$\mathcal{L} = \A + \alpha \sum_{i = 1}^N \delta_i$,
where we use the following parameter values:
\begin{equation} \label{equ:prior_parameters}
\alpha = 1, \quad \mat{\Theta} = 5\times 10^{-2}\begin{pmatrix} 1/2 & 0\\ 0 & 2\end{pmatrix}.
\end{equation}
In Figure~\ref{fig:prior}, we show the prior mean $\iparpr$, obtained by
solving \eqref{equ:prior_mean_prob} and three random draws from the
prior distribution. Note that our choice for $\Theta$ corresponds to a
prior distribution with stronger correlation in $y$-direction.
It remains to specify
the noise covariance matrix, for which we choose $\ncov = \sigma^2 I$, with
$\sigma = 0.05$.
We use a linear triangular finite element mesh with $\Nm = 1{,}121$
degrees of freedom to discretize the state, adjoint and the parameter
variables. The discrete inference parameters are the coefficients in
the finite element expansion of the parameter field.

\begin{figure}[tb]\centering
\begin{tikzpicture} 
  \node (1) at (0*\pos, 0*\pos){\includegraphics[height=.23\textwidth, trim=0 0 60
      0, clip=true]{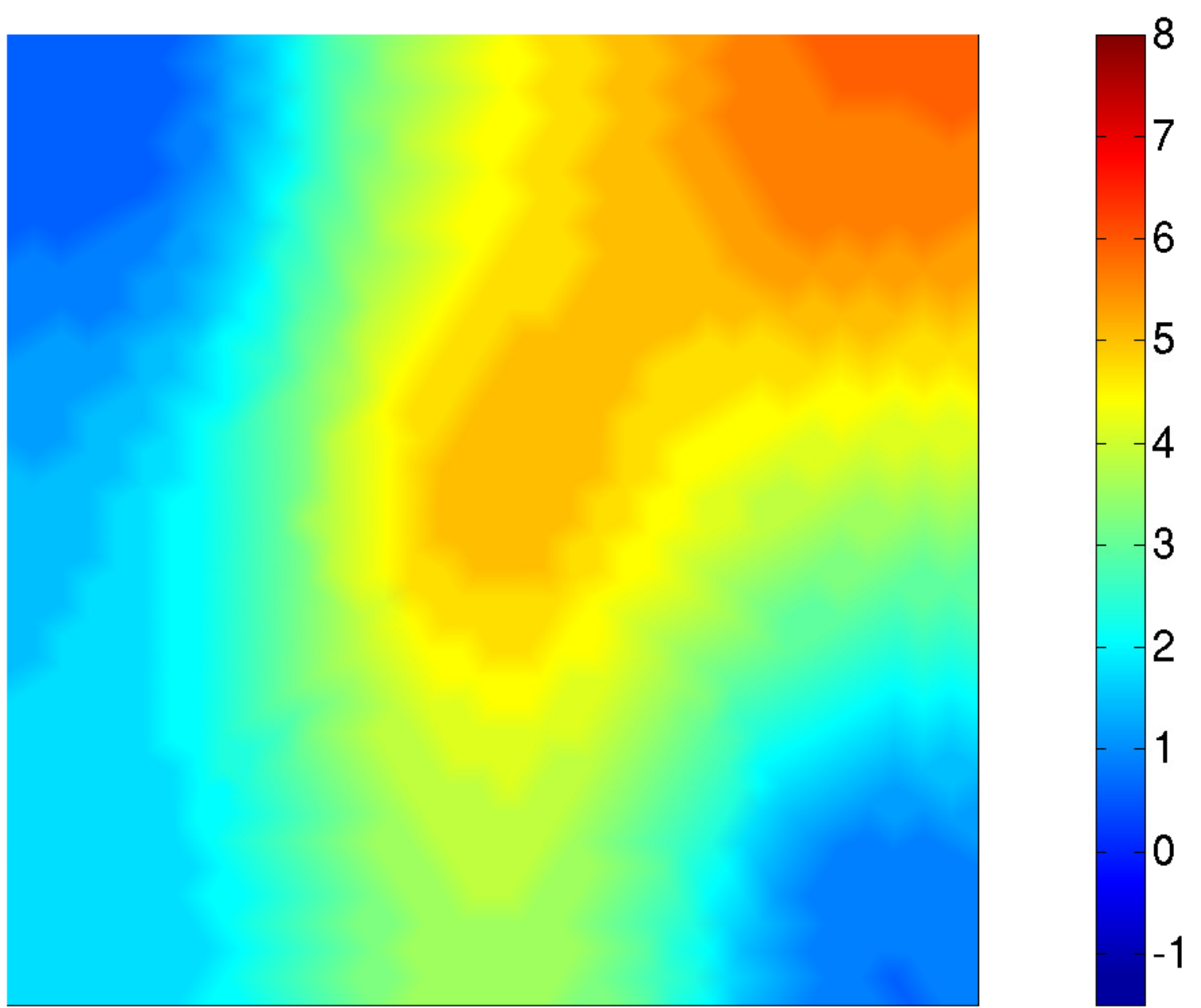}};
  \node (2) at (.5*\pos, 0*\pos){\includegraphics[height=.23\textwidth, trim=0 0 60
      0, clip=true]{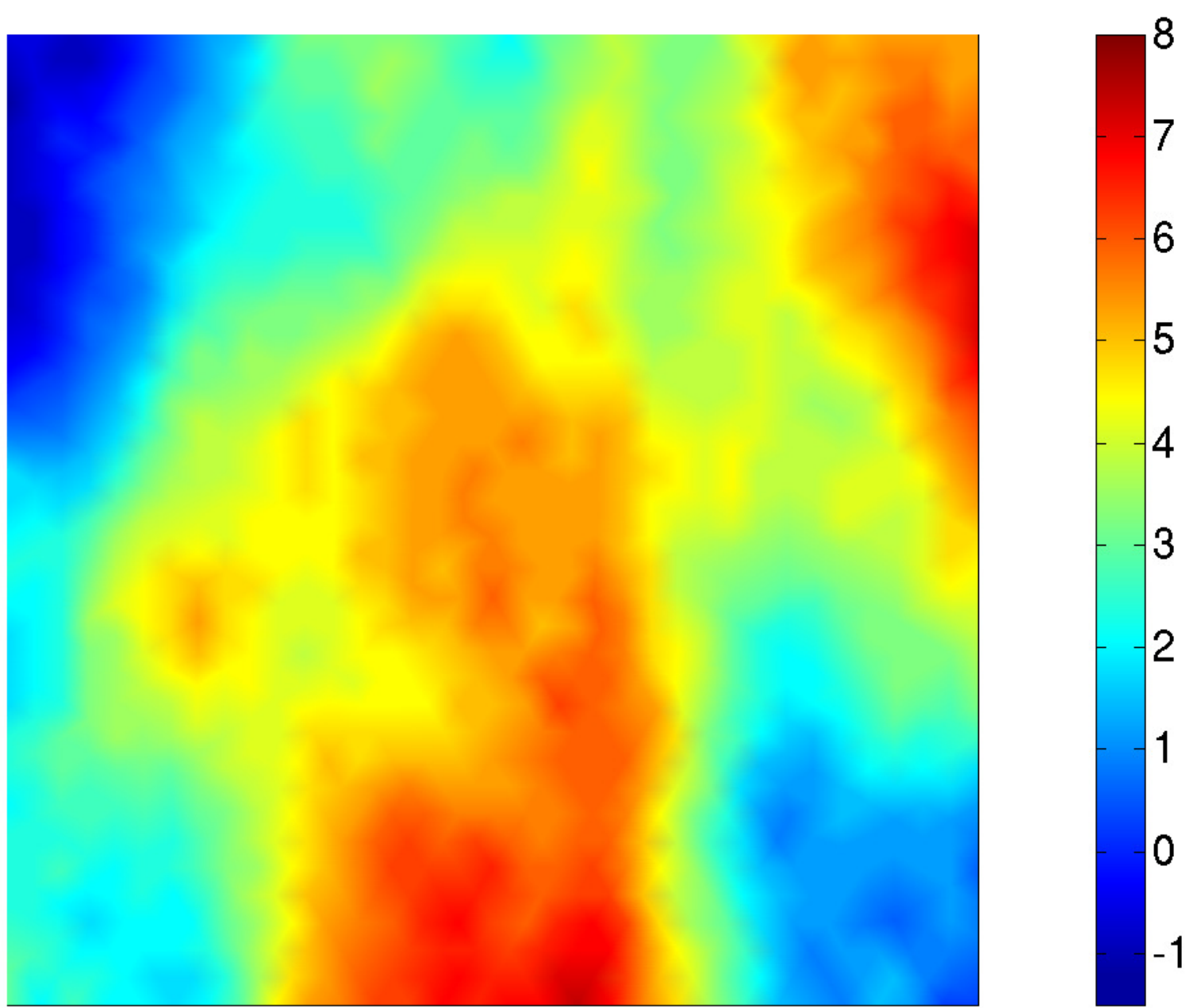}};
  \node (3) at (1*\pos, 0*\pos){\includegraphics[height=.23\textwidth, trim=0 0 60
      0, clip=true]{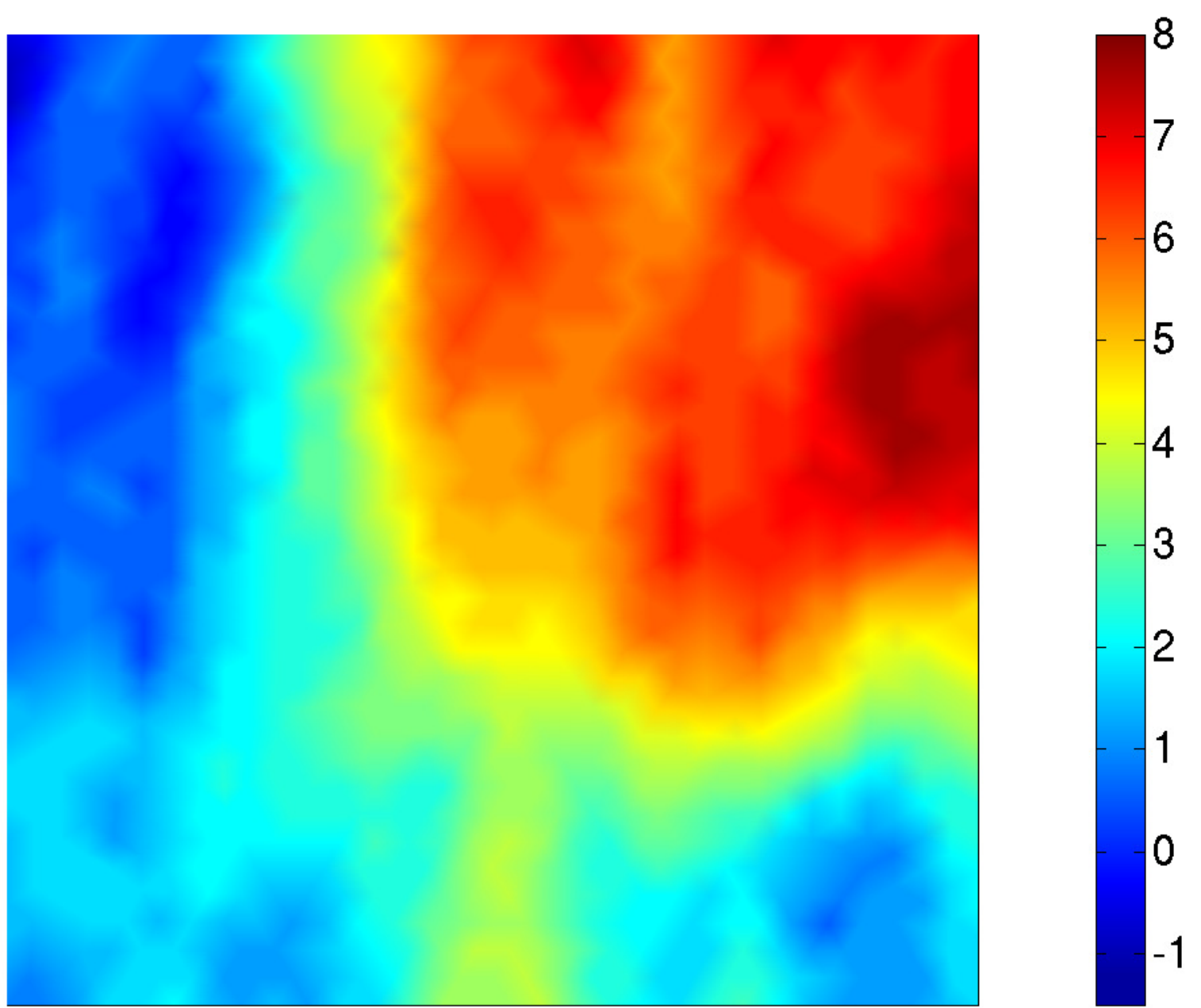}};\hspace{.2cm}
  \node (4) at (1.5*\pos, 0*\pos){\includegraphics[height=.23\textwidth, trim=0 0 0
      0, clip=true]{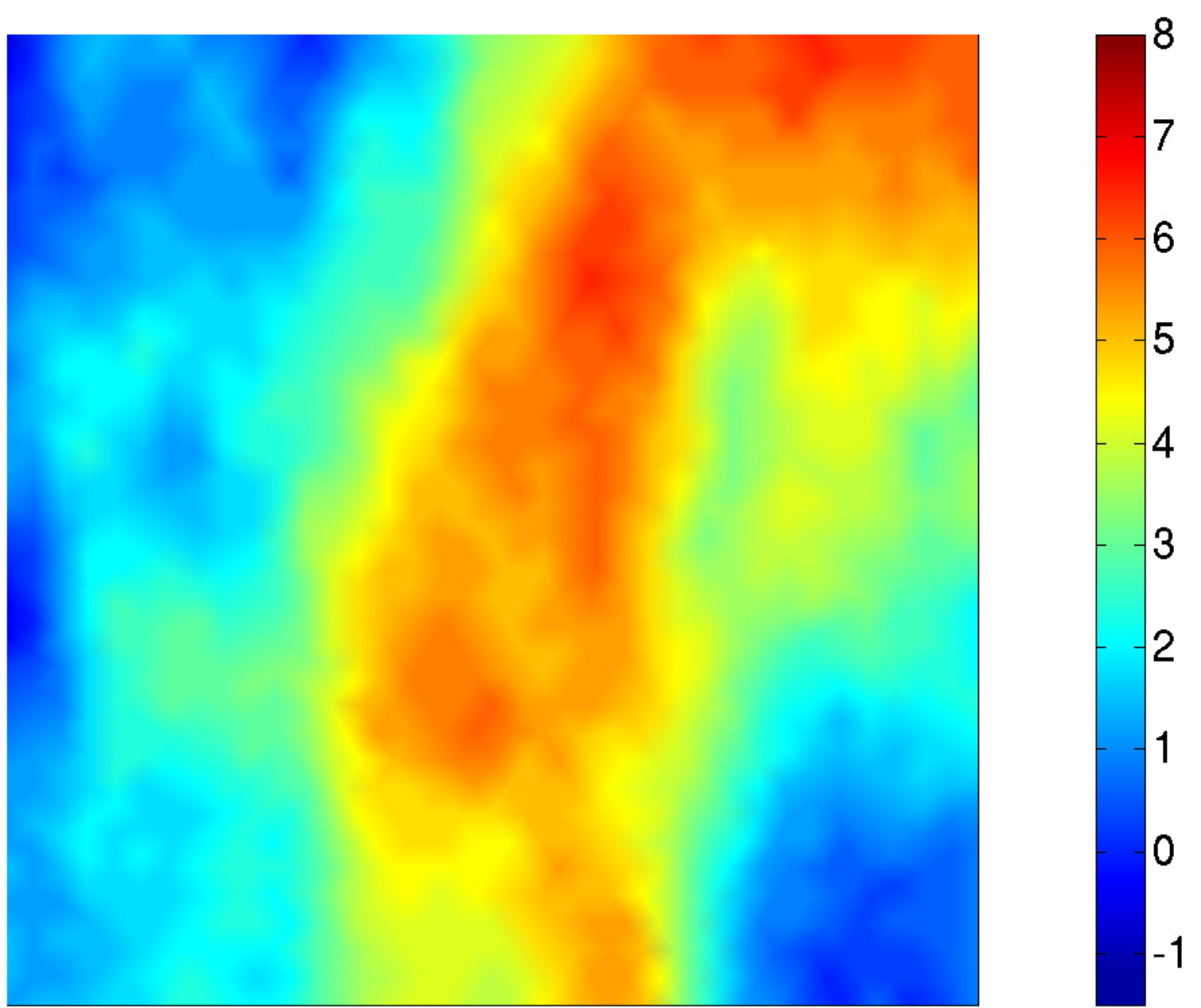}};
  \node at (0.0*\pos-0.22*\pos, -0.2*\pos) {\sf a)}; 
  \node at (.5*\pos-0.22*\pos,  -0.2*\pos) {\sf b)}; 
  \node at (1.*\pos-0.22*\pos, -0.2*\pos) {\sf c)}; 
  \node at (1.5*\pos-0.22*\pos, -0.2*\pos) {\sf d)}; 
  \node at (-0.05*\pos-0.15*\pos, -0.18*\pos) {\textcolor{white}{$\circ$}}; 
  \node at (-0.05*\pos+0.18*\pos, -0.18*\pos) {\textcolor{white}{$\circ$}}; 
  \node at (-0.05*\pos-0.15*\pos,  0.17*\pos) {\textcolor{white}{$\circ$}}; 
  \node at (-0.05*\pos+0.18*\pos,  0.17*\pos) {\textcolor{white}{$\circ$}}; 
  \node at (-0.03*\pos,           0.005*\pos) {\textcolor{white}{$\circ$}}; 
\end{tikzpicture}
\caption{Prior mean log permeability $\iparpr$ with
  circles indicating the points $\vec x_1$,\ldots,$\vec x_5$ where
  permeability measurements are available (a), and samples drawn from
  the prior distribution (b)--(d).}
\label{fig:prior}
\end{figure}

\subsection{Effectiveness of A-optimal design}\label{sec:prob1_effectivness}
We solve the OED problem~\eqref{equ:outeropt} with 
$\Nd = 5$ experimental data samples $\obs_i$, and use $\Ntr =
20$ random vectors in the trace estimator.
We employ $\ell_0$-sparsification using the continuation process
described in section~\ref{sec:sparsity}. We obtain
an optimal sensor configuration with $10$ sensors for the penalty parameter
$\upgamma = 0.008$, and an optimal design with $20$ sensors for
$\upgamma = 0.005$.
As first test of the
effectiveness of the resulting designs, we solve the inference problem 
with the ``truth'' parameter field given in Figure~\ref{fig:forwardprob}a).
Using data obtained at the A-optimal sensor configuration (with $10$ sensors), we compute
the MAP point by solving~\eqref{equ:poi-inner-opt} and the
Gaussian approximation of the posterior measure at the MAP point.  The
results are shown in Figure~\ref{fig:oed10}, where the posterior
standard deviation field is also compared with the prior standard
deviation field.
\begin{figure}[ht]\centering
  \begin{tikzpicture}
  \node (1) at (0*\pos, 0*\pos){\includegraphics[height=.28\textwidth, trim=0 0 00
      0, clip=true]{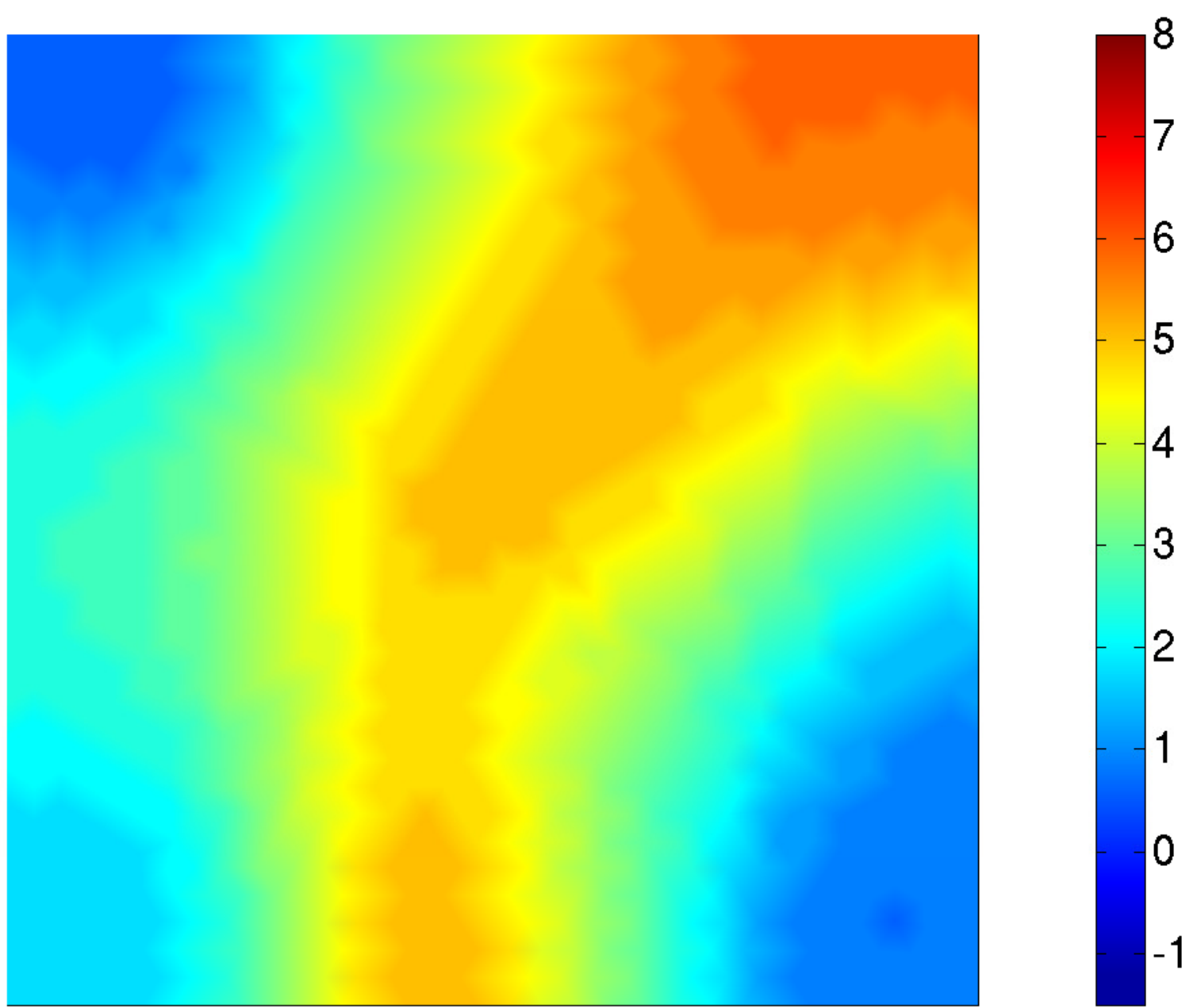}};
  \node (1) at (0.73*\pos, 0*\pos){\includegraphics[height=.28\textwidth, trim=0 0 60
      0, clip=true]{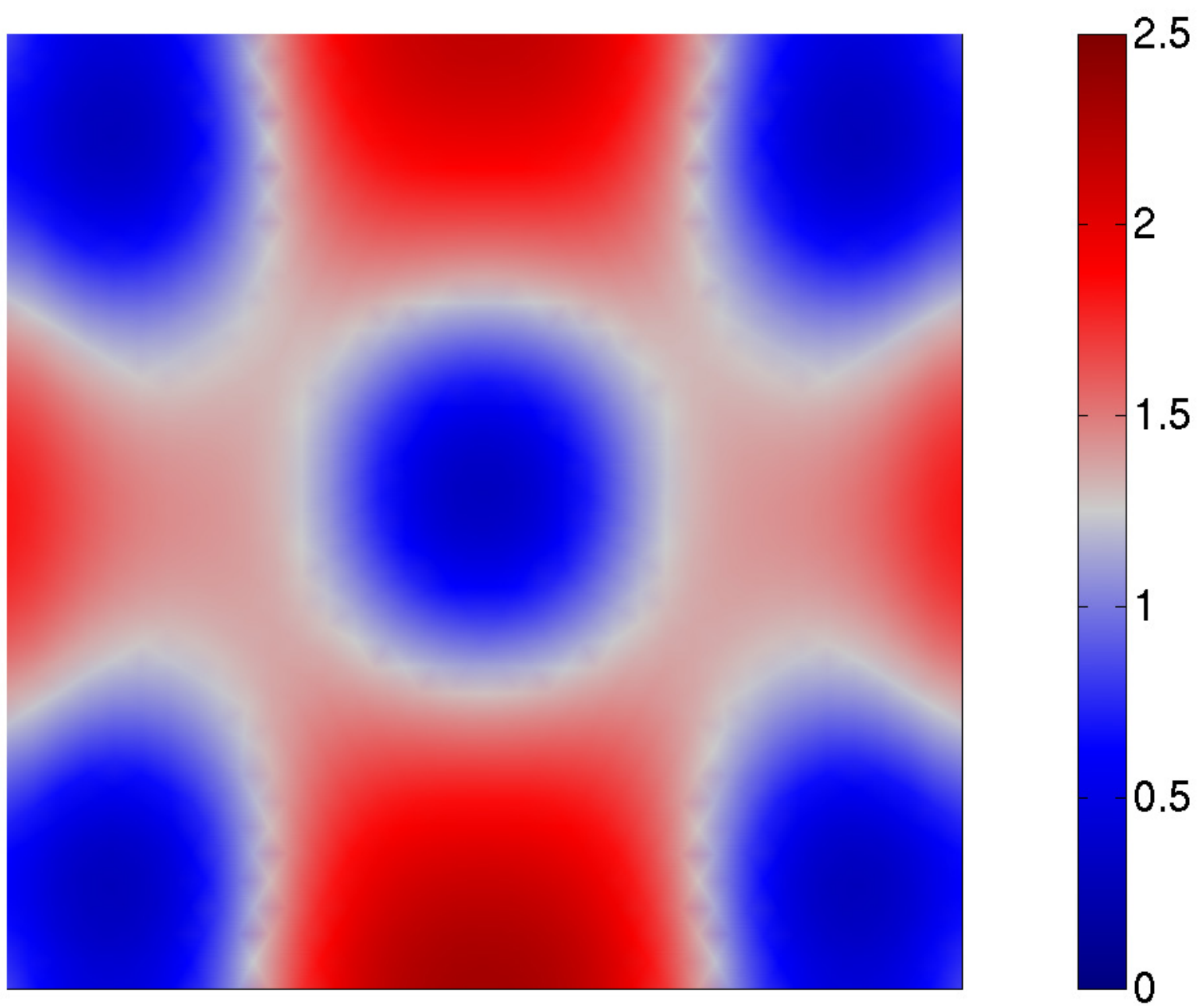}};
  \node (1) at (1.37*\pos, 0*\pos){\includegraphics[height=.28\textwidth, trim=0 0 00
      0, clip=true]{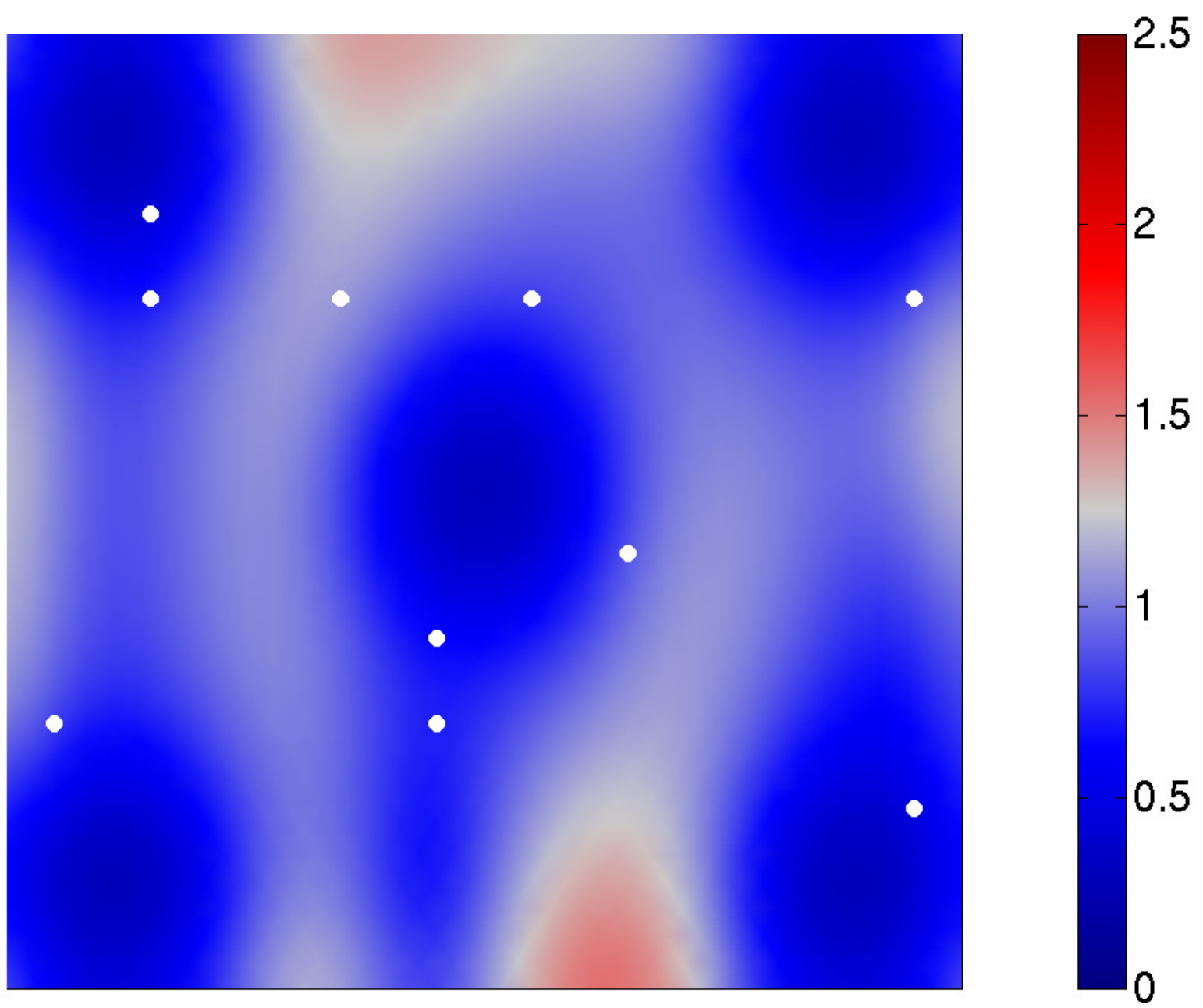}};
  \node at (0.0*\pos-0.3*\pos, -0.24*\pos) {\textcolor{white}{\sf a)}}; 
  \node at (.68*\pos-0.2*\pos,  -0.24*\pos) {\textcolor{white}{\sf b)}}; 
  \node at (1.33*\pos-0.25*\pos, -0.24*\pos) {\textcolor{white}{\sf c)}}; 
  \end{tikzpicture}
\caption{MAP point computed using the optimal design (a); prior
  standard deviation field (b) and posterior standard deviation field,
  with the optimal design sensor locations (10 sensors) indicated by
  white dots (c).}
\label{fig:oed10}
\end{figure}
To study the effectiveness of the optimal designs, we first report the
error with respect to the ``truth'' permeability field
$\ipar_\text{true}$. In Figure~\ref{fig:cloudplots} we show a
comparison of the relative error of the MAP estimator,
\[
    E_\text{rel}(\vec{w}) = \frac{\norm{\iparmap(\vec{w}) - \ipar_{\text{true}}}}{\norm{\ipar_{\text{true}}}},
\]
and of $\trace(\H(\vec{w})^{-1})=\trace(\postcov)$ for the optimal design
$\vec{w}_\text{opt}$ and for random designs with the same number of sensor
locations, where $\|\cdot\|$ is the $L^2$-norm.
From Figure~\ref{fig:oed10}, we draw the following conclusions:
(1) The optimal design with $10$ sensors improves over randomly selected designs 
more significantly than the optimal design with $20$ sensors; this indicates
that as sensors become more scarce, computing optimal design is more important.
(2) There is a correlation between minimizing the average variance and
that of minimizing the $L^2$-error of the MAP estimator.
This is interesting but not entirely surprising, because for a Bayesian 
linear inverse problem with Gaussian prior and noise, it can be shown
that minimizing the average posterior variance is equivalent to minimizing
the average mean square error of the MAP estimator~\cite{AlexanderianPetraStadlerEtAl14}.  
\begin{figure}[ht]\centering
\begin{tabular}{cc}
\begin{tikzpicture}[]
\begin{axis}[width=5cm, height=4cm, scale only axis,
    xlabel = $E_\text{rel}(\vec{w})$, ylabel=$\trace\big(\H^{-1}(\vec{w})\big)$, xmin=0.2, xmax=.62,
    ymin=0.6, ymax=1.2, legend style={font=\small,nodes=right}, legend pos= north east]
\addplot [color=red, mark=*, only marks, mark size=1pt] table[x=dist,y=tr]{./cloud10.txt};
\addlegendentry{Random}
\addplot [color=blue, mark=*, only marks, mark size=2pt] table[x=dist,y=tr]{./cloud10_opt.txt};
\addlegendentry{Optimal}
\end{axis}
\end{tikzpicture}
&
\begin{tikzpicture}[]
\begin{axis}[width=5cm, height=4cm, scale only axis,
    xlabel = $E_\text{rel}(\vec{w})$, xmin=0.2, xmax=.62,
    ymin=0.6, ymax=1.2, legend style={font=\small,nodes=right}, legend pos= north east]
\addplot [color=red, mark=*, only marks, mark size=1pt] table[x=dist,y=tr]{./cloud20.txt};
\addlegendentry{Random}
\addplot [color=blue, mark=*, only marks, mark size=2pt] table[x=dist,y=tr]{./cloud20_opt.txt};
\addlegendentry{Optimal}
\end{axis}
\end{tikzpicture}
\end{tabular}
\caption{Shown is the relative error, $E_\text{rel}$, of the MAP estimator versus
  $\trace(\H^{-1}(\iparmap(\vec{w})))$ for random designs
  $\vec{w}$ (red dots) and the optimal design $\vec{w}_\text{opt}$ (blue dot).
  These results are for designs with $10$ (left) and $20$
  (right) sensors. 
   }
\label{fig:cloudplots}
\end{figure}
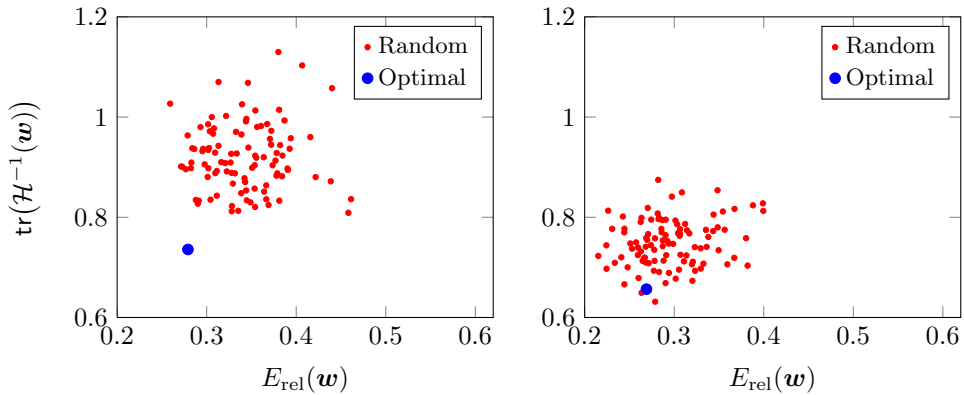
Note that the results shown in Figure~\ref{fig:cloudplots} study the
effectiveness of the OED with respect to a
specific ``truth'' model. A natural question to ask is how effective
the design is if we were trying to recover a different underlying
truth?  To address this issue, we conduct a statistical test
of the effectiveness of the optimal designs as follows. We draw
samples $\{ m_1', \ldots, m_{\Ndp}'\}$ from the prior measure and get
corresponding data vectors %
$\obs_i' = f(m_i') + \vec{\eta}_i'$, with $\eta_i'$ drawn from
$\GM{\vec{0}}{\ncov}$, $i = 1, \ldots, \Ndp$.  For a given design,
$\vec{w}$, we compute an expected error $\overline E_\text{rel}$ and
an expected average variance  $\overline{V}$:
\[
   \begin{aligned}
   \overline{V}(\vec{w}) = \frac1{\Ndp} \sum_{i = 1}^{\Ndp} \trace\big(\H^{-1}(\vec{w}, \obs_i')\big), \qquad
   \overline{E}_\text{rel}(\vec{w}) = \frac1{\Ndp} \sum_{i = 1}^{\Ndp} \frac{\norm{\iparmap(\vec{w}; \obs_i') - m_i'}}{\norm{m_i'}}.
   \end{aligned}
\]
For the
purpose of this numerical test, we let $\Ndp$ be larger than the
number $\Nd$ of the data samples used in computing the optimal
design, and the samples $\{ m_1', \ldots, m_{\Ndp}'\}$ are drawn
independently of the samples used in the sample average used for the
OED objective function (see section~\ref{subsec:sample}). Hence, $\overline V$ is
essentially a more accurate estimate of the objective function we
sought to minimize when solving the OED problem.  This allows us to
assess how well an optimal design, computed based on a small set of
data $\{\obs_1,\ldots,\obs_\Nd\}$ does in minimizing the more accurate estimate
$\overline V$.

For designs with $10$ and $20$ sensors, we compute
$\overline{V}(\vec w)$ and $\overline{E}_\text{rel}(\vec w)$ with
$\Ndp = 50$ for optimal and for $\Nw = 30$ randomly chosen designs $\vec{w}_1,
\ldots, \vec{w}_\Nw$.
The results,
shown in Figure~\ref{fig:ubercloudplots}, indicate that the
A-optimal designs computed with a relatively small number of data samples 
not only minimize the average posterior variance, but
also result in a minimal expected error between the true parameter
and the MAP point.
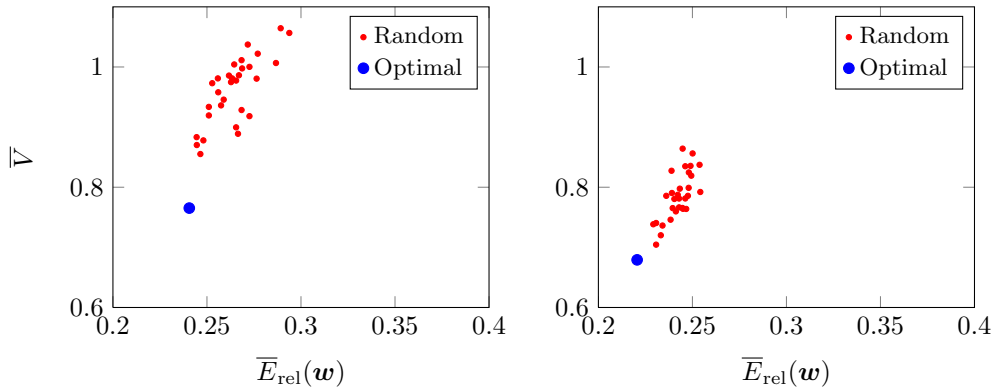
\begin{figure}[ht]\centering
\begin{tabular}{cc}
\begin{tikzpicture}[]
\begin{axis}[width=5cm, height=4cm, scale only axis,
    xlabel = $\overline{E}_\text{rel}(\vec{w})$, ylabel=$\overline V$,xmin=0.2, xmax=.4,
    ymin=0.6, ymax=1.1, legend style={font=\small,nodes=right}, legend pos= north east]
\addplot [color=red, mark=*, only marks, mark size=1pt] table[x=dist,y=tr]{./ubercloud10.txt};
\addlegendentry{Random}
\addplot [color=blue, mark=*, only marks, mark size=2pt] table[x=dist,y=tr]{./ubercloud10_opt.txt};
\addlegendentry{Optimal}
\end{axis}
\end{tikzpicture}
&
\begin{tikzpicture}[]
\begin{axis}[width=5cm, height=4cm, scale only axis,
    xlabel = $\overline{E}_\text{rel}(\vec{w})$, xmin=0.2, xmax=.4,
    ymin=0.6, ymax=1.1, legend style={font=\small,nodes=right}, legend pos= north east]
\addplot [color=red, mark=*, only marks, mark size=1pt] table[x=dist,y=tr]{./ubercloud20.txt};
\addlegendentry{Random}
\addplot [color=blue, mark=*, only marks, mark size=2pt] table[x=dist,y=tr]{./ubercloud20_opt.txt};
\addlegendentry{Optimal}
\end{axis}
\end{tikzpicture}
\end{tabular}
\caption{Expected relative error
  $\overline{E}_\text{rel}(\vec{w})$ versus expected average variance
  $\overline{V}(\vec{w})$ for random designs (red dots) and for the
  optimal design (blue dot). The left and right panels correspond to designs
  with $10$ and $20$ sensors, respectively.}
\label{fig:ubercloudplots}
\end{figure}

\subsection{Scalability and performance}\label{sec:prob1_scalability}
Finally, we examine the convergence behavior of our method as
the number of parameters and the number of sensor candidate locations
increases. Specifically, we study the computational cost in terms of
the number of solves of \eqref{equ:poi},
its adjoint, or the associated
incrementals. These elliptic PDE solves are the main building block of our 
method.

First, we consider the cost of computing the OED objective function
and its gradient. As seen in Algorithm~\ref{alg:aopt} and the
discussion in section~\ref{sec:complexity}, a
significant part of the computational cost of evaluating the OED
objective function amounts to solving the inner optimization
problem for the MAP point using an inexact Newton-CG method.
Here, the computational cost is
dominated by the CG iterations needed in each Newton step. Hence, as a
measure of the computational cost, we report the total number of
``inner'' CG iterations. We also report the number of ``outer'' CG iterations in
steps 6 and 13 of Algorithm~\ref{alg:aopt}, which are required for
computing the OED objective function and the gradient, respectively.
For this numerical study, we focused on the evaluation of the OED cost
function and its gradient at $\vec{w} = (1, 1, \cdots, 1) \in \R^\Ns$
(i.e. with all sensors active) and with $\Ntr = \Nd = 1$. The results
shown in Figure~\ref{fig:scalability} indicate that the
computational cost of evaluating the OED objective function and its
gradient are insensitive to increasing the parameter dimension, and
only depend weakly on the number of sensor candidate locations. 
Figure~\ref{fig:scalability} also shows
the number of interior point quasi-Newton iterations required for
solving the OED optimization problem, as parameter and sensor
dimensions increase.  As can be seen, the number of
iterations for solving the OED optimization problem is insensitive to
both parameter and sensor dimensions.

\begin{figure}[ht]\centering
\begin{tabular}{cc}
\begin{tikzpicture}[]
\begin{axis}[width=5cm, height=2.8cm, scale only axis, 
    xmin = 400, xmax = 6000,
    ymin = 10, ymax = 300,
    xtick = {500, 1500, 3000, 5000},
    xticklabel = \empty,
    legend style={font=\small,nodes=right}, legend pos= north east]
\addplot [color=black!40!white!60!blue, mark=*, mark size=1pt, thick, dashed] table[x=n,y=inner]{./fnEval_scalability_data_meshref.txt};
\addlegendentry{\#inner CG iter.}
\node at (axis cs:  750,  270) {\sf a)};
\addplot [color=black, mark=*, mark size=1pt, thick] table[x=n,y=outer]{./fnEval_scalability_data_meshref.txt};
\addlegendentry{\#outer CG iter.}
\end{axis}
\end{tikzpicture}
&
\begin{tikzpicture}[]
\begin{axis}[width=5cm, height=2.8cm, scale only axis, 
    xmin = 10, xmax = 800,
    ymin = 10, ymax = 300,
    xtick = {10, 100, 250, 500, 750},
    xticklabel = \empty,
    legend style={font=\small,nodes=right}, legend pos= north east]
\addplot [color=black!40!white!60!blue, mark=*, mark size=1pt, thick, dashed] table[x=ns,y=inner]{./fnEval_scalability_data_wref.txt};
\addlegendentry{\#inner CG iter.}
\node at (axis cs:  60,  270) {\sf b)};
\addplot [color=black, mark=*, mark size=1pt, thick] table[x=ns,y=outer]{./fnEval_scalability_data_wref.txt};
\addlegendentry{\#outer CG iter.}
\end{axis}
\end{tikzpicture}\\[-.3cm]
\begin{tikzpicture}[]
\begin{axis}[width=5cm, height=2.8cm, scale only axis,
    xmin = 400, xmax = 6000,
    ymin = 10, ymax = 120,
    xtick = {500, 1500, 3000, 5000},
    ytick = {10, 50, 100},
    yticklabels = {10, 50, 100}, 
    xlabel = Parameter dimension $\Nm\phantom{_s}$,
    legend style={font=\small,nodes=right}, legend pos= north east]
\addplot [color=black, mark=*, mark size=1pt, thick] table[x=n,y=iter]{./optimization_scalability_data_meshref.txt};
\addlegendentry{\#OED iter.}
\node at (axis cs:  750,  105) {\sf c)};
\end{axis}
\end{tikzpicture}
&
\begin{tikzpicture}[]
\begin{axis}[width=5cm, height=2.8cm, scale only axis,
    xmin = 1, xmax = 800,
    ymin = 10, ymax = 120,
    xtick = {10, 100, 250, 500, 750},
    ytick = {10, 50, 100},
    yticklabels = {10, 50, 100}, 
    xlabel = Sensor dimension $\Ns$,
    legend style={font=\small,nodes=right}, legend pos= north east]
\addplot [color=black, mark=*, mark size=1pt, thick] table[x=ns,y=iter]{./optimization_scalability_data_wref.txt};  
\addlegendentry{\#OED iter.}
\node at (axis cs:  60,  105) {\sf d)};
\end{axis}
\end{tikzpicture}
\end{tabular}
\caption{Computational cost of evaluating the OED objective function
  and its gradient: Shown in (a) and (b) are the total number of CG
  iterations for the inner optimization problem (to compute the MAP
  point) and for the outer optimization problem (trace estimation with inverse Hessian)
  as the parameter dimension increases (a), and as the number of
  candidate locations for sensors increases (b).  Shown in (c) and (d)
  are the number of interior-point iterations for increasing parameter dimension (c) and
  sensor dimension (d).  For (a) and (c), the sensor
  dimension is fixed at $\Ns = 100$, and for (b) and (d),
  the parameter dimension is fixed at $\Nm =
  2101$.
  }
\label{fig:scalability}
\end{figure}
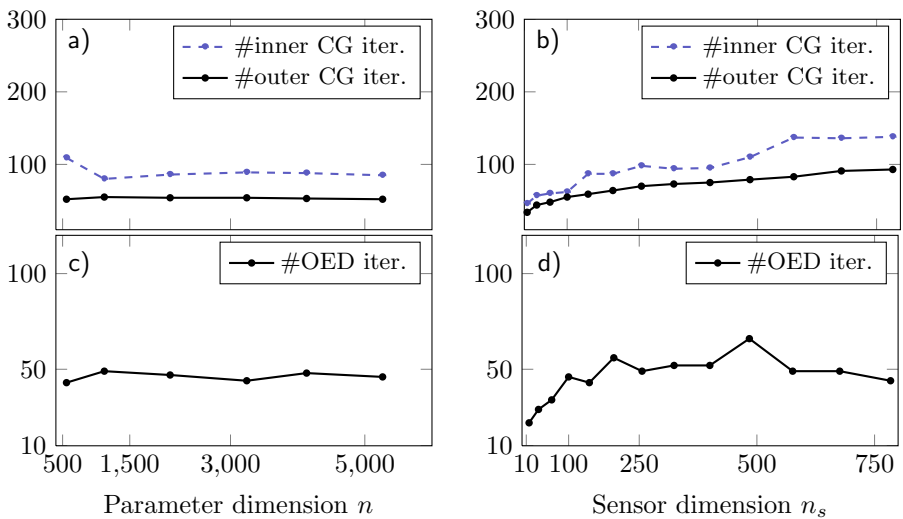

\section{Example 2: Subsurface flow based on SPE10 model}\label{sec:example2}
In this section, we consider a more realistic permeability field using
permeability field data from the Society of Petroleum Engineers' 10th
SPE Comparative Solution Project (SPE10).\footnote{See
  \url{http://www.spe.org/web/csp/datasets/set02.htm} for the
  description of the dataset.}

\subsection{Bayesian inverse problem setup}
We define the physical domain $\D = (0, 2.2) \times (0, 1.2)$ (with
unit of length in 1000's of feet) and use as the ``truth''
permeability field a vertical slice\footnote{We use the 70th slice,
  counted from the top.} of the three-dimensional SPE10 permeability
data.  Following the setup of the SPE10 model, we consider an
injection well in the center of the domain, and four production wells
at the corners of the domain. The injection well is modeled as a
mollified point source, and enters~\eqref{equ:poi} through the right
hand side function $f$ given by
$
   f(\vec{x}) = {C}/({2\pi L}) \exp \left\{ -{1}/({2L}) (\vec{x} - \vec{x}_0)^T(\vec{x} - \vec{x}_0) \right\}, 
$
with $L = 10^{-4}$ and $C = 50$, and $\vec{x}_0 = (1.1, 0.6)$. To model the production wells, we fix the pressure at zero at the four corners of the
domain. Specifically, we cut circular regions 
from the four corners of the domain (modeling the
boundaries of wells) and impose zero Dirichlet boundary conditions
on the resulting quarter circles. Homogeneous Neumann boundary conditions
are used on the remainder of the boundary.
In Figure~\ref{fig:spe_forward}, we show the ``truth'' log
permeability field, as well as the Darcy velocity field and the
pressure obtained by solving the state equation with the true
permeability field.
\begin{figure}[ht]\centering
  \begin{tikzpicture}
  \node (1) at (0*\pos, 0*\pos){\includegraphics[width=.46\textwidth, trim=0 0 00
      0, clip=true]{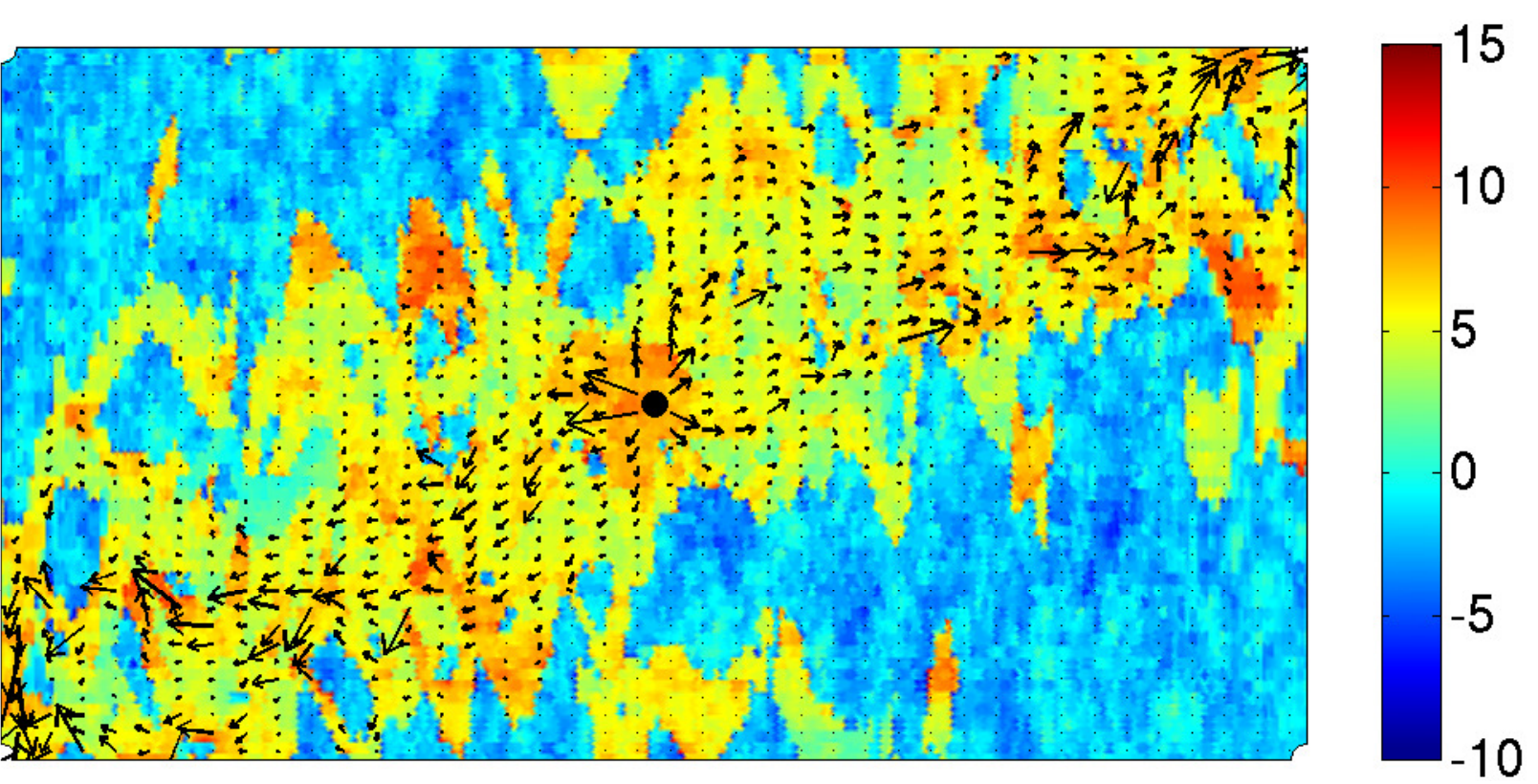}};
  \node (2) at (1*\pos, 0*\pos){\includegraphics[width=.46\textwidth, trim=0 0 00
      0, clip=true]{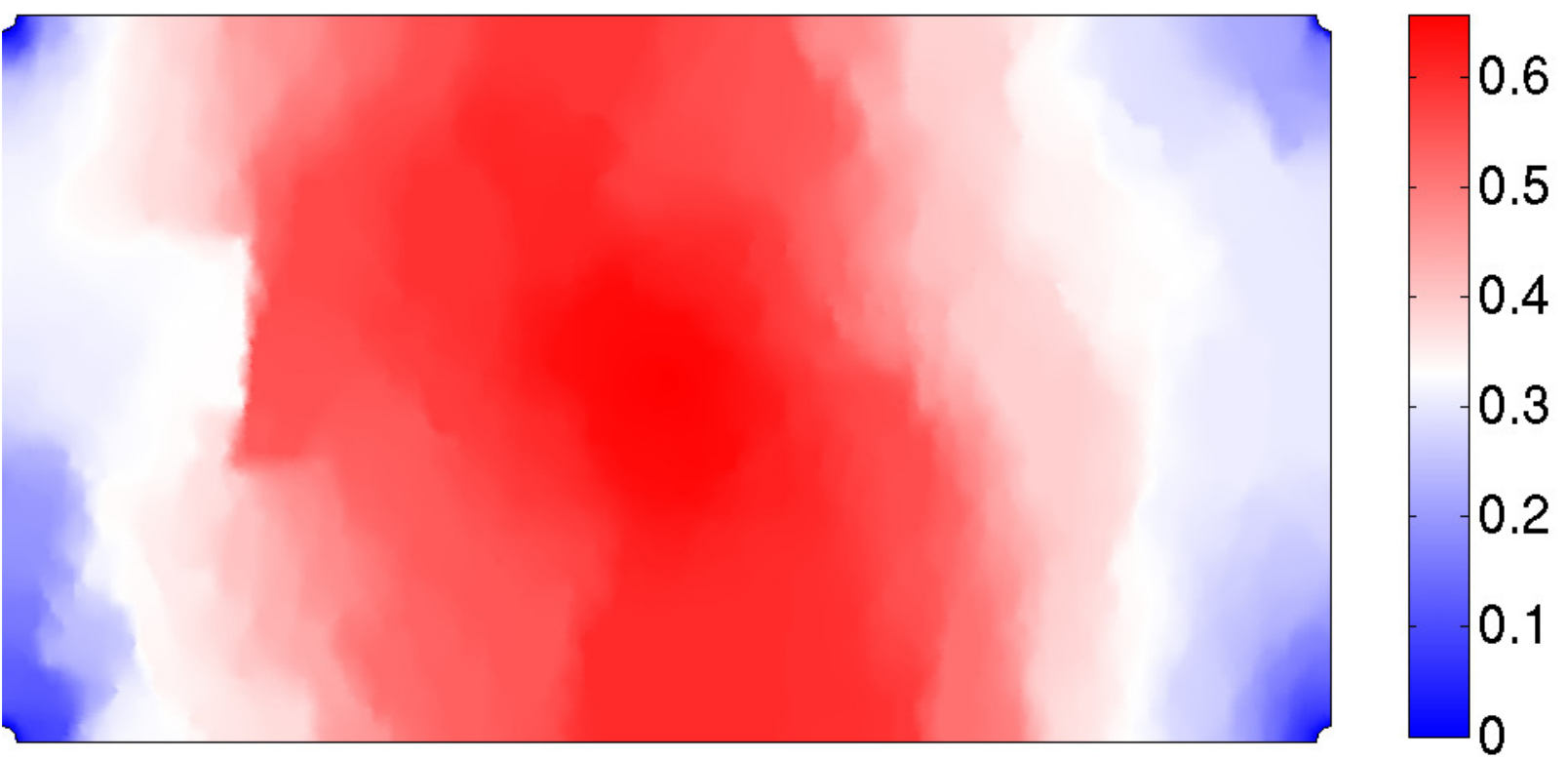}};
  \node at (0.0*\pos-0.42*\pos, 0.17*\pos) {\sf a)}; 
  \node at (1*\pos-0.42*\pos, 0.17*\pos) {\sf b)}; 
  \end{tikzpicture}
\caption{In (a) we show the ``truth'' SPE10 log permeability field
  with arrows depicting the corresponding Darcy velocity field,
  $\vec{q} = -\frac{\Exp{m}}{\eta}\grad u$, where $m=\ipart$ denotes
  the true permeability and $\eta$ is the viscosity, chosen as $\eta =
  1$.  The black dot in the center indicates the location of the
  injection well.  Shown in (b) is the pressure $u$ obtained by
  solving the state equation with $\ipart$.}
\label{fig:spe_forward}
\end{figure}

The prior construction is similar as in the previous test problem.  We
assume estimates $\ipart^1,\ldots,\ipart^5$ of the log
permeability at $N=5$ points, one at the injection well in the 
center of $\D$, and the others are near each of the four corners of the domain (at the production well boundaries).
Based on this data, we compute the
mean of the prior measure, as a regularized least-squares fit of these
point observations as in~\eqref{equ:prior_mean_prob}; see Figure~\ref{fig:spe_prior}(a).  As before, 
the prior covariance is $\C_0 = \mathcal{L}^{-2}$ where
$\mathcal{L} = -\theta\Delta + \alpha \sum_{i = 1}^N \delta_i$, with
parameter values in~\eqref{equ:prior_parameters} given by $\theta =
3.54\times10^{-2}$ and $\alpha = 1.25\times10^1$.
\begin{figure}[tb]\centering
\begin{tikzpicture}
  \node (1) at (0*\pos, 0*\pos){\includegraphics[width=.32\textwidth, trim=0 0 00
      0, clip=true]{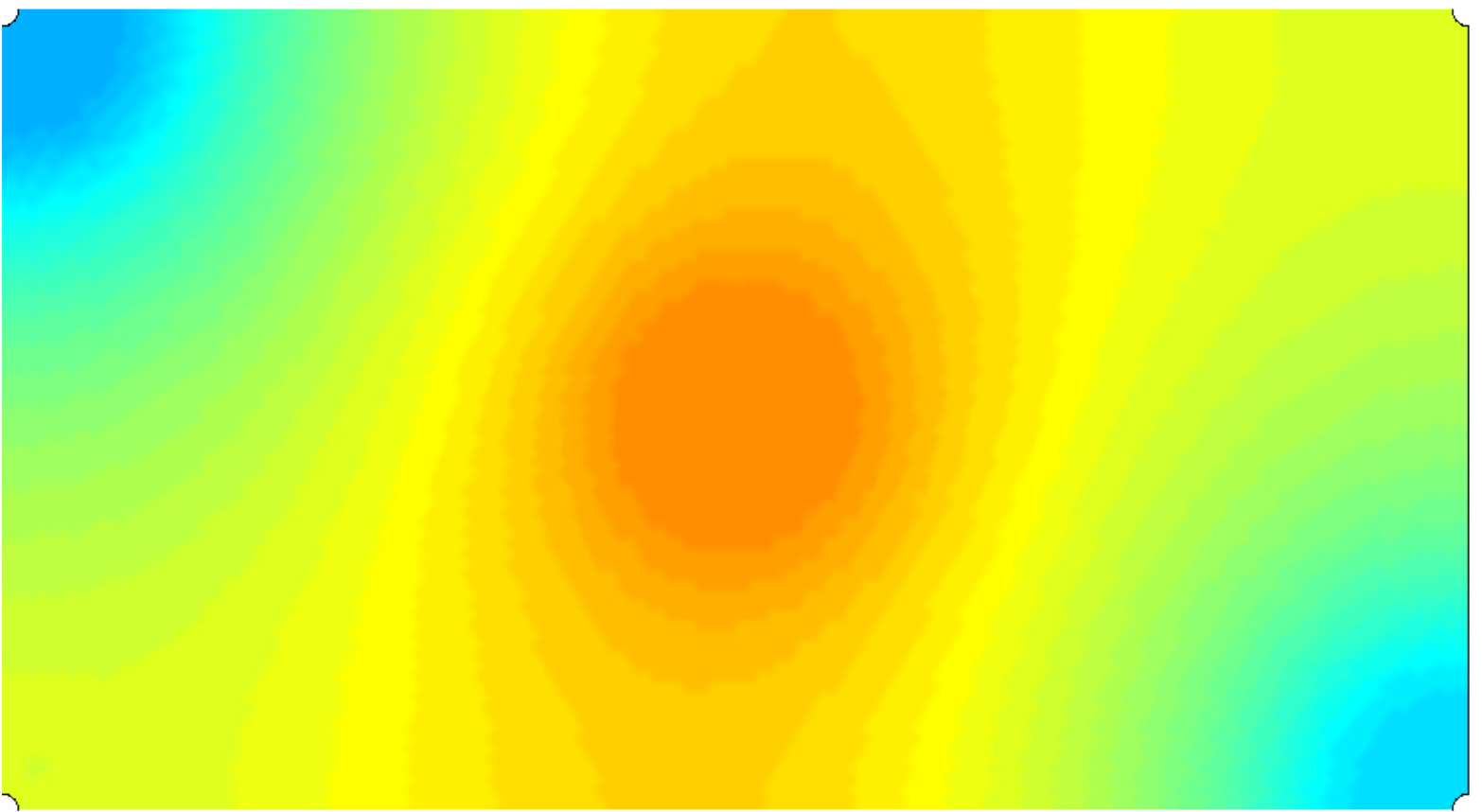}};
  \node (2) at (0.7*\pos, 0*\pos){\includegraphics[width=.32\textwidth, trim=0 0 00
      0, clip=true]{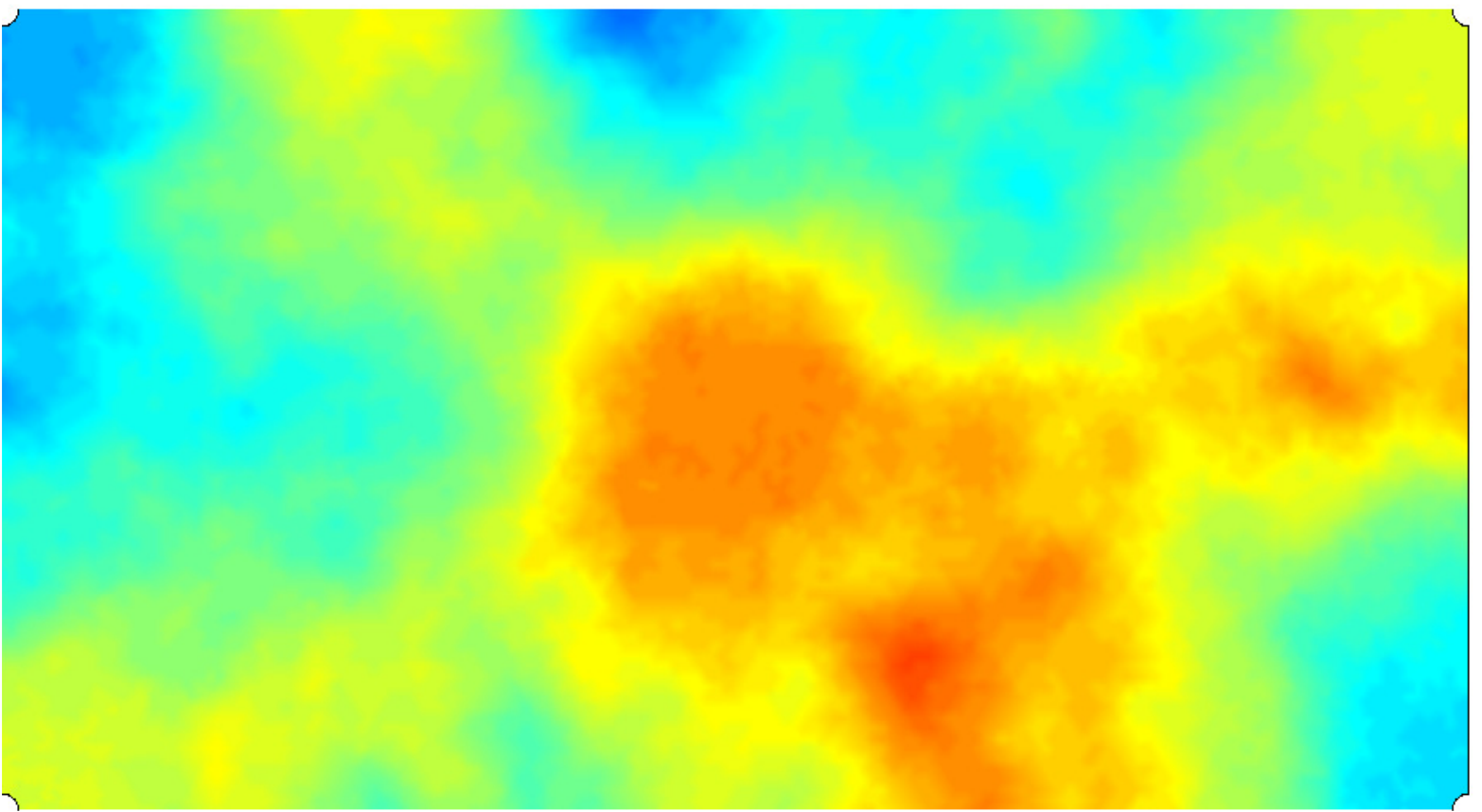}};
  \node (3) at (1.4*\pos, 0*\pos){\includegraphics[width=.32\textwidth, trim=0 0 00
      0, clip=true]{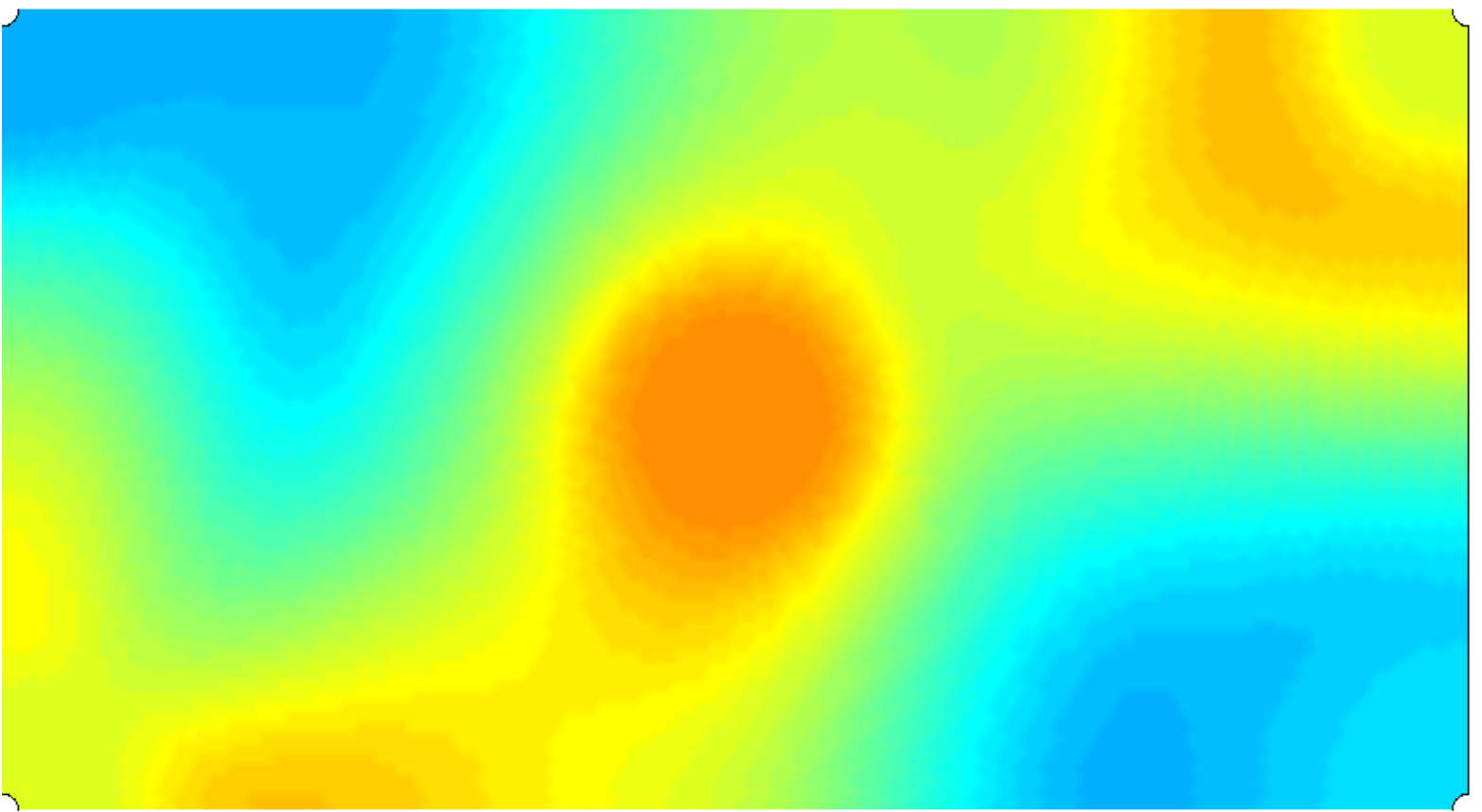}};
  \node at (0.2*\pos-0.48*\pos, 0.13*\pos) {\sf a)}; 
  \node at (0.9*\pos-0.48*\pos, 0.13*\pos) {\sf b)}; 
  \node at (1.6*\pos-0.48*\pos, 0.13*\pos) {\sf c)}; 
\end{tikzpicture}%
\caption{Prior mean (a), sample draw from the prior used to generate
  data for the OED process (b), and, in (c) the MAP point found with on the optimal
  sensor locations shown in Figure~\ref{fig:spe_aopt}b.}
\label{fig:spe_prior}
\end{figure}
Linear triangular finite elements with $\Nm = 10{,}202$
degrees of freedom are used to discretize the state, adjoint and the parameter
variables.

\subsection{A-optimal design of experiments}
We use a grid of $128$ candidate 
sensor locations in the domain $\D$, and compute an A-optimal design based on one data sample, 
computed using one random draw from prior depicted in
Figure~\ref{fig:spe_prior}(b). 
For the OED objective function, given in~\eqref{equ:outeropt}, we use a trace estimator with $\Ntr = 20$ random vectors. 
After six continuation iterations, our method converged to a 0/1
design vector. In each continuation step we terminated the interior-point iterations if either the relative residual 
fell below $10^{-5}$ or if we reached a maximum of 100 interior-point BFGS iterations.

We solve the Bayesian inverse problem using experimental data at the
A-optimal sensor locations for the ``truth'' log permeability
$\ipart$. To capture the extreme variations in the
permeability field, we solve the forward problem using quadratic
triangular elements on a finer mesh with $n = 237,573$ degrees of
freedom, and record pressure measurements at the sensor
sites. This data vector is subsequently used in the solution of
the Bayesian inverse problem.  After solving the Bayesian inverse
problem with the A-optimal sensor configuration, in Figure~\ref{fig:spe_prior}c, 
we show the MAP point, and in
Figure~\ref{fig:spe_aopt}, compare the prior and posterior standard deviation
fields. %
\begin{figure}[ht]\centering
  \begin{tikzpicture}
  \node (1) at (0*\pos, 0*\pos){\includegraphics[height=.25\textwidth, trim=0 0 00
      0, clip=true]{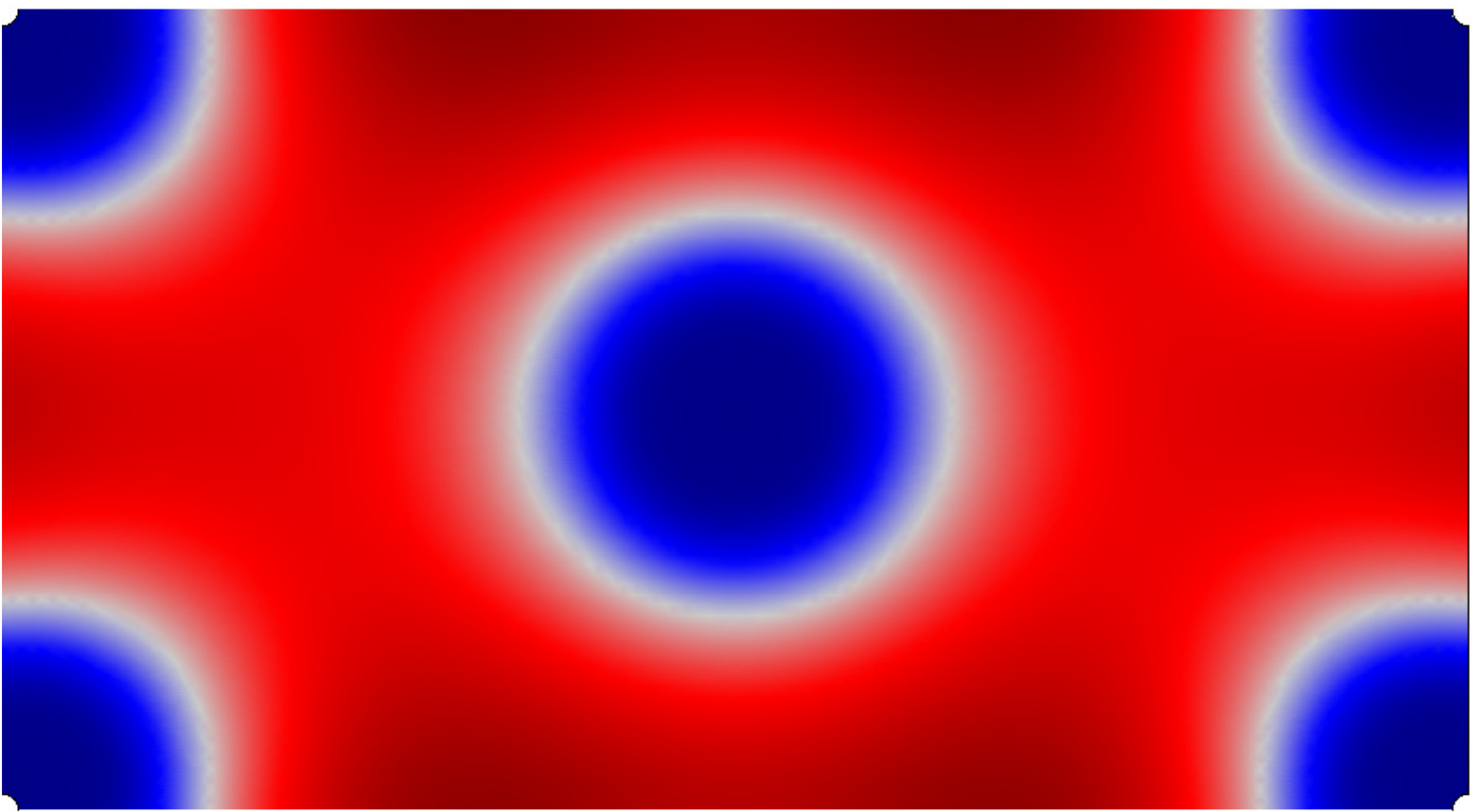}};
  \node (1) at (1*\pos, 0*\pos){\includegraphics[height=.25\textwidth, trim=0 0 00
      6, clip=true]{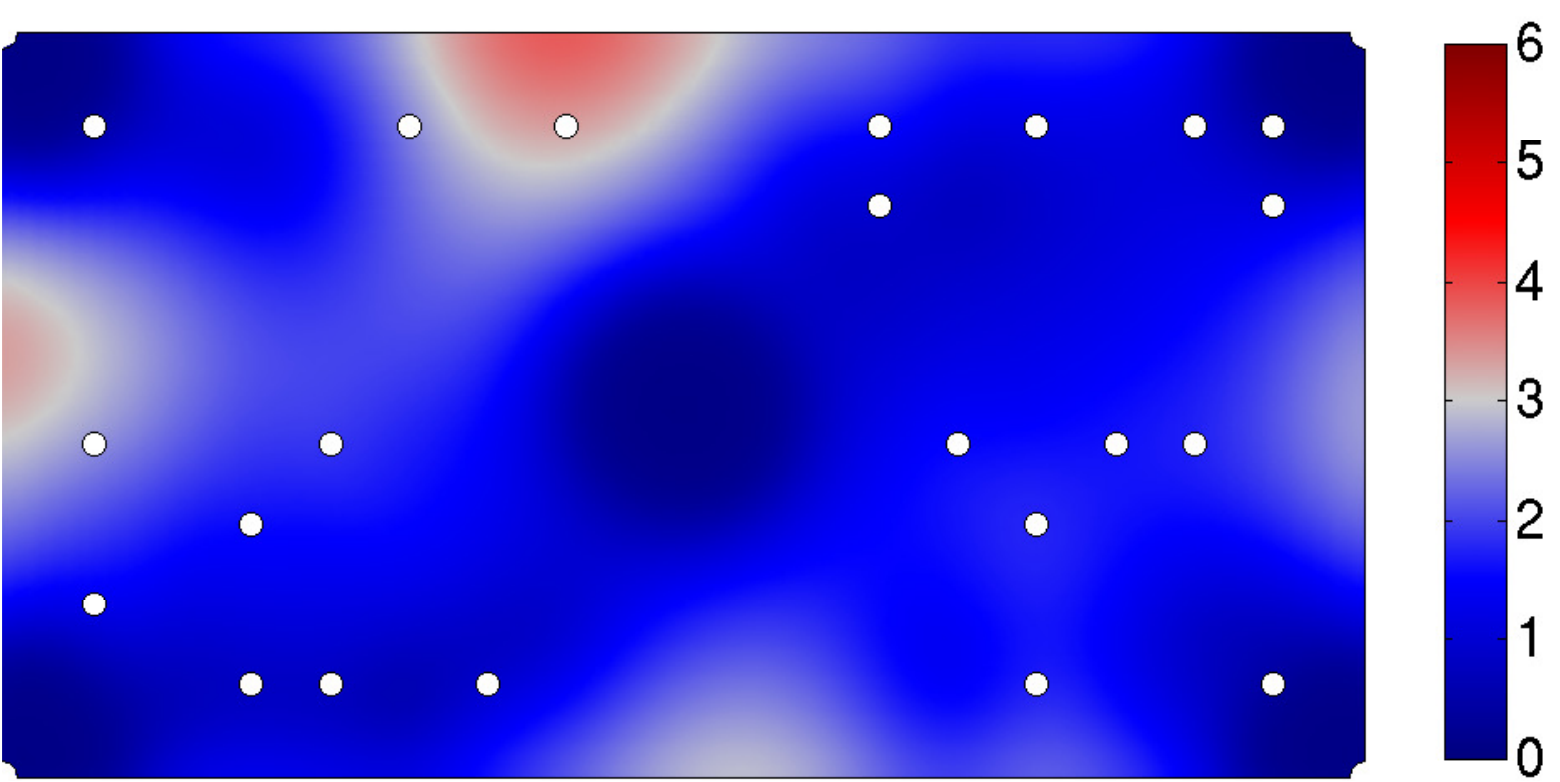}};
  \node at (0.05*\pos-0.48*\pos, 0.2*\pos) {\textcolor{white}{\sf a)}};
  \node at (1*\pos-0.49*\pos, 0.2*\pos) {\textcolor{white}{\sf b)}}; 
  \end{tikzpicture}
\caption{Shown are the prior standard deviation field (a) and the
  posterior standard deviation field based on solving the Bayesian
  inverse problem using the A-optimal sensor placement (b). The white
  dots in (b) indicate the A-optimal sensor locations.}
\label{fig:spe_aopt}
\end{figure}
Finally, to assess the effectiveness of the A-optimal sensor placement computed, we compare
the relative error of the MAP point as well as the average posterior variance, based on 
solving the Bayesian inverse problem using the optimal design versus that of solving the 
problem with randomly generated designs with the same number of sensors. 
Note that %
the A-optimal
sensor placement outperforms the random designs. 
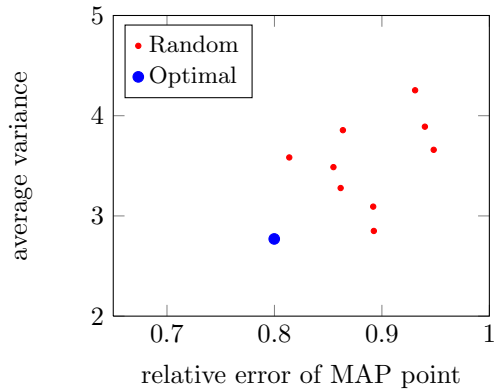
\begin{figure}[ht]\centering
\begin{tikzpicture}[]
\begin{axis}[width=5cm, height=4cm, scale only axis,
    xlabel = relative error of MAP point, ylabel=average variance, xmin=0.65, xmax=1,
    ymin=2, ymax=5, legend style={font=\small,nodes=right}, legend pos= north west]
\addplot [color=red, mark=*, only marks, mark size=1pt] table[x=err,y=avgvar]{./spe_cloud_randpts.txt};
\addlegendentry{Random}
\addplot [color=blue, mark=*, only marks, mark size=2pt] table[x=err,y=avgvar]{./spe_cloud_opt.txt};
\addlegendentry{Optimal}
\end{axis}
\end{tikzpicture}
\caption{Relative error of the MAP point versus
  average posterior variance for random designs (red dots) and for the
  optimal design (blue dot). The results corresponds to designs
  with $22$ sensors.}
\end{figure}

\section{Conclusions and remarks}

We have developed a scalable method for computing A-optimal
experimental designs for infinite-dimensional Bayesian nonlinear
inverse problems governed by PDEs. By scalable, we mean that the cost
(measured in forward-like PDE solves) of solving the OED problem is
independent of the parameter and sensor dimensions. The OED
formulation results in a bilevel optimization problem that features an
inverse problem as the inner optimization problem, and additional
forward-like PDEs representing the action of the inverse Hessian of
the inverse problem as constraints for the outer optimization problem.
We specialize this OED formulation to the problem of determining the
sensor placement that optimally infers the coefficient of an elliptic
PDE in the sense that the uncertainty in the recovered coefficient is
minimized over a set of prior model samples. For the resulting
PDE-constrained OED problem, we derive adjoint-based expressions for
the gradient, which enables use of efficient gradient-based
optimization algorithms. Computing the gradient of the OED objective
function requires differentiating expressions involving the Hessian,
which requires third derivatives of the parameter-to-observable
map. These are made tractable via a variational formulation of the OED
problem.
Numerical studies of the performance of our OED method for the
inference of the log permeability field in a porous media flow problem
indicate that the computational cost of computing an A-optimal
experimental design, measured in the number of forward-like PDE
solves, is insensitive to the dimension of the discretized parameter
field and to the sensor dimension.

A potential limitation of our method is defining the OED objective in
terms of a Gaussian approximation to the posterior distribution of the
parameter field.  However, as mentioned in the introduction, a
Gaussian provides a good approximation to the posterior in cases where
a linear approximation to the parameter-to-observable map over the set
of parameters with significant posterior probability is sufficiently
accurate.  Relaxing the Gaussian approximation of the posterior for
large-scale Bayesian inverse problems with expensive-to-evaluate
parameter-to-observable maps is extremely challenging.  The fact that
the Bayesian inverse problem is merely an inner problem for computing
OEDs compounds these challenges. 

A related consideration is the influence of the prior on the
OED obtained from our formulation. In cases where one has
limited prior information, samples from the prior may
have rather different features.
Since data computed from these vastly different prior samples are used as
``training data'' in our OED formulation, the resulting design might
be suboptimal for the ``truth'' parameter as we are searching for an
A-optimal design that accommodates a wide range of data.
In such cases, an effective  strategy could be an iterative process: namely, one
conducts initial field experiments and obtains a Bayesian
update, which better constrains the uncertain parameter field. This field is
then used as prior in the computation of an OED, whose target is to
collect additional experimental
data.

Another limitation of our approach is that our
sparsification strategy provides only indirect control on the number
of sensors in the optimal configuration. In practice, solving multiple
OED problems may be required to determine an appropriate penalty
parameter experimentally. This, however, is the price we pay to render
an otherwise combinatorial sensor placement problem computationally
tractable.

Computing optimal experimental designs still requires a large number
of forward (or adjoint or incremental) PDE solves. However, as
discussed in Section~\ref{sec:complexity}, a number of systems
characterized by the same Hessian operator must be solved at each OED
step, which suggests that using low rank Hessian approximations as
discussed
in~\cite{AlexanderianPetraStadlerEtAl14,Bui-ThanhGhattasMartinEtAl13,FlathWilcoxAkcelikEtAl11}
can mitigate this computational cost. Moreover, our OED method
contains important coarse-grained parallelism: the inverse problems
corresponding to each data sample can be solved independently.

In future work, we intend to study the sensitivity of the optimal
sensor placement to the number of data samples in the OED problem. The
data samples are generated by sampling the prior model; their number
is dictated by the need to solve an additional inverse problem for
each sample at each OED iteration. For this reason, the numerical
experiments in this paper have been limited to a small number of data
samples. However, we speculate that increasing the number of data
samples leads to diminishing returns, since the goal is not to fully
sample the prior, but to determine optimal sensor locations, and we
expect that they will be sensitive to only a limited number of
directions in the parameter space. Thus, an interesting extension of
this work is to determine how many data samples are needed. 

\appendix
\section{An infinite-dimensional trace estimator}\label{apdx:trace_estimator}
Let $\mu_\delta = \GM{0}{\C_\delta}$ and $\tilde{\mu}_\delta = \GM{0}{\A^{1/2} \C_\delta \A^{1/2}}$
with $\A$ and $\C_\delta$ as in the paragraph preceding~\eqref{equ:quadform}; the final 
equality in~\eqref{equ:quadform} follows by noting that
\[
    \int_\hilb \ip{z}{\A z} \, \mu_\delta(dz)
                           \!=\! \int_\hilb \|\A^{1/2}z\|^2\, \, \mu_\delta(dz)
                           \!=\! \int_\hilb \norm{y}^2 \, \tilde{\mu}_\delta(dy)
                           \!=\! \trace(\A^{1/2} \C_\delta\A^{1/2}) = \trace(\A\C_\delta).
\]
The following result justifies taking the limit as we let $\delta \to 0$.
\begin{proposition}
Let $\D$ be a bounded domain with Lipschitz boundary and consider the operator $\C_\delta = (-\delta \Delta + I)^{-2}$ defined on $L^2(\D)$, 
where $\delta$ is a positive real and $\Delta$ is the Laplacian operator on $\D$ with the natural boundary condition.
Suppose $\A$ is a positive self-adjoint trace-class operator on $L^2(\D)$. Then,
\begin{equation*}%
    \lim_{\delta \to 0} \trace(\A\C_\delta) = \trace(\A). 
\end{equation*}
\end{proposition}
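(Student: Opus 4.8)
The plan is to reduce the statement to a scalar dominated-convergence argument by diagonalizing $\A$. Since $\A$ is positive, self-adjoint and trace-class, it is compact, so the spectral theorem furnishes an orthonormal basis $\{\phi_k\}_{k\ge 1}$ of $\hilb = L^2(\D)$ consisting of eigenvectors of $\A$, with eigenvalues $\lambda_k \ge 0$ satisfying $\sum_{k} \lambda_k = \trace(\A) < \infty$. Using the symmetrization already recorded above, namely $\trace(\A\C_\delta) = \trace(\A^{1/2}\C_\delta\A^{1/2})$, and evaluating this trace of a positive trace-class operator in the eigenbasis of $\A$ (which also diagonalizes $\A^{1/2}$), I would write
\[
\trace(\A\C_\delta) = \sum_{k} \ip{\phi_k}{\A^{1/2}\C_\delta \A^{1/2}\phi_k}
= \sum_{k} \ip{\A^{1/2}\phi_k}{\C_\delta \A^{1/2}\phi_k}
= \sum_{k} \lambda_k \ip{\phi_k}{\C_\delta \phi_k}.
\]
This recasts the claim as showing $\sum_k \lambda_k \ip{\phi_k}{\C_\delta\phi_k} \to \sum_k \lambda_k$ as $\delta \to 0$.

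Next I would analyze the scalar coefficients $c_k(\delta) := \ip{\phi_k}{\C_\delta\phi_k}$. Because $-\Delta \ge 0$ one has $-\delta\Delta + I \ge I$, hence $0 < \C_\delta \le I$ as operators, and therefore $0 < c_k(\delta) \le \norm{\phi_k}^2 = 1$ uniformly in both $\delta$ and $k$. For the term-wise limit I would invoke the functional calculus for the self-adjoint operator $-\Delta$ with its natural (Neumann) boundary condition: writing $-\Delta = \int_0^\infty \mu \, dE_\mu$, we have $\C_\delta = \int_0^\infty (\delta\mu+1)^{-2}\, dE_\mu$, and since $(\delta\mu+1)^{-2} \to 1$ pointwise in $\mu$ and is bounded by $1$, dominated convergence against the finite spectral measure $\mu \mapsto \ip{E_\mu \phi_k}{\phi_k}$ gives $\norm{\C_\delta \phi_k - \phi_k} \to 0$. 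That is, $\C_\delta \to I$ strongly, and in particular $c_k(\delta) \to 1$ for each fixed $k$.

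Finally I would interchange the limit with the infinite sum. Since $0 \le \lambda_k c_k(\delta) \le \lambda_k$ and $\{\lambda_k\}$ is summable with $\sum_k \lambda_k = \trace(\A)$ independent of $\delta$, the dominated convergence theorem for series applies and yields
\[
\lim_{\delta\to0}\trace(\A\C_\delta)
= \lim_{\delta\to0}\sum_k \lambda_k c_k(\delta)
= \sum_k \lambda_k \lim_{\delta\to0} c_k(\delta)
= \sum_k \lambda_k = \trace(\A),
\]
as claimed.

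The main obstacle is precisely this interchange of the $\delta\to0$ limit with the summation over $k$: note that $\C_\delta \to I$ only in the strong, not the operator-norm, topology, since $\norm{\C_\delta - I}$ does not tend to zero because the spectrum of $-\Delta$ is unbounded, so a crude uniform operator estimate is unavailable. What rescues the argument is that one only ever needs the \emph{term-wise} limits $c_k(\delta)\to1$, and the trace-class hypothesis supplies the summable, $\delta$-independent dominating majorant $\{\lambda_k\}$, which is exactly the ingredient the dominated convergence theorem for series demands.
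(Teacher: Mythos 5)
Your proof is correct, but it runs along a route dual to the paper's. The paper expands the trace in the ($\delta$-independent) eigenbasis $\{e_i\}$ of the Neumann Laplacian, so that the operator $I-\C_\delta$ contributes explicit scalar eigenvalues $\lambda_i^\delta = 1-1/(1+\delta\nu_i)^2 \in [0,1)$ and the trace-class operator $\A$ contributes the summable weights $\ip{e_i}{\A e_i}$; it then proves the limit by an explicit $\eps/2$ tail-splitting (choose $N_0$ so the tail $\sum_{i>N_0}\ip{e_i}{\A e_i}$ is small, then shrink $\delta$ to kill the finitely many head terms). You instead expand in the eigenbasis $\{\phi_k\}$ of $\A$, so that $\A$ contributes its eigenvalues $\lambda_k$ as the summable weights and $\C_\delta$ contributes the bounded scalar factors $c_k(\delta)=\ip{\phi_k}{\C_\delta\phi_k}$, whose termwise convergence to $1$ you get from the spectral theorem for $-\Delta$; the interchange is then the dominated convergence theorem for series, which is exactly what the paper's $\eps/2$ argument proves by hand. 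The trade-off: your version needs only the projection-valued spectral measure of $-\Delta$ (no discreteness of its spectrum), so it would apply verbatim with any nonnegative self-adjoint operator in place of the Neumann Laplacian, whereas the paper's version leans on the compact-resolvent fact that the Neumann Laplacian on a bounded Lipschitz domain admits a complete orthonormal eigenbasis, and in exchange gets completely explicit eigenvalue formulas and a self-contained elementary limit argument. Both proofs use the trace-class hypothesis in the same essential way, as the $\delta$-independent summable majorant.
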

\begin{proof}
Let us consider the difference, 
$\trace(\A) - \trace(\A\C_\delta) = \trace(\A(I - \C_\delta))$.
Denote by $\{e_i\}_{i = 1}^\infty$ the eigenvectors of $I - \C_\delta$ (independent of $\delta$) and by 
$\lambda_i^\delta$ the respective eigenvalues. By the definition of the operator $\C_\delta$ 
we have $\lambda_i^\delta = 1 - 1/(1 + \delta \nu_i)^2$
where $\nu_i$ are the (unbounded) eigenvalues of $-\Delta$. 
Using the fact that $0 \leq \nu_i \to \infty$, we know $0 \leq \lambda_i^\delta < 1$ for all $\delta > 0$. 
Next, we note
\begin{equation*}
   \trace(\A(I - \C_\delta)) = \sum_{i = 1}^\infty \ip{e_i}{\A(I - \C_\delta)e_i} 
   = \sum_{i = 1}^\infty \lambda_i^\delta \ip{e_i}{\A e_i} < \infty.
\end{equation*}
Let $\eps > 0$ be fixed but arbitrary and note that we can fix $N_0 \in \N$ such that, 
$
   \sum_{i = N_0 + 1}^\infty \lambda_i^\delta \ip{e_i}{\A e_i} \leq \sum_{i = N_0 + 1}^\infty \ip{e_i}{\A e_i} < \eps/2
$.
Also, we can choose $\delta > 0$ sufficiently small so that,
$
   \sum_{i = 1}^{N_0} \lambda_i^\delta \ip{e_i}{\A e_i} \leq \norm{A} \sum_{i = 1}^{N_0} \lambda_i^\delta < \eps/2, 
$
and hence the assertion of the proposition follows. 
\end{proof}

\section{Gradient derivation of OED objective function $\hat\obj$}\label{appdx:oed-gradient}
Here, we summarize the derivation of the gradient of the OED objective
function presented in~\eqref{equ:oed-grad}. 
To derive the expression for the gradient, we employ a
formal Lagrangian approach \cite{Troltzsch10}, which uses a Lagrangian
function composed of the objective function \eqref{equ:outeropt} with
the PDE constraints \eqref{equ:state}--\eqref{equ:incgrad} enforced
through Lagrange multiplier functions.  This Lagrangian function
$\LOED$ for the OED problem is given by:
\begin{alignat*}{1}
  \LOED&\left(\vec{w}, \{u_i\}, \{m_i\}, \{p_i\}, \{v_{ik}\}, \{q_{ik}\},\{ y_{ik}\}, 
              \{\ad{u}_i\}, \{\ad{m}_i\}, \{\ad{p}_i\}, \{\ad{v}_{ik}\}, \{\ad{q}_{ik}\}, \{\ad{y}_{ik}\}\right) \\
  =&\frac{1}{\Nd\Ntr}\sum_{i=1}^\Nd \sum_{k = 1}^\Ntr \ip{ z_k }{y_{ik}} \\
  &+ \sum_{i = 1}^\Nd \big[ \ip{\Exp{m_i} \grad u_i}{\grad \ad{u}_i} -  \ip{f}{\ad{u}_i} - \ip{h}{\ad{u}_i}_{\GN}\big]\\
  &+ \sum_{i = 1}^\Nd \big[\ip{\Exp{m_i} \grad p_i}{\grad \ad{p}_i} + \ip{\B^*\Wn(\B u_i - \obs_i)}{\ad{p}_i}\big]\\
  &+ \sum_{i = 1}^\Nd \big[\cip{m_i - \iparpr}{ \ad{m}_i} + \ip{\ad{m}_i \Exp{m_i}\grad u_i}{\grad p_i}\big]\\
  &+ \sum_{i = 1}^\Nd \sum_{k = 1}^\Ntr\big[\ip{ \Exp{m_i}\grad v_{ik}}{\grad \ad{v}_{ik}} + \ip{y_{ik}\Exp{m_i}\grad{u_i}}{\grad \ad{v}_{ik}}\big]\\
  &+ \sum_{i = 1}^\Nd \sum_{k = 1}^\Ntr\big[\ip{ \Exp{m_i}\grad q_{ik}}{\grad \ad q_{ik}} + \ip{y_{ik} \Exp{m_i}\grad p_i}{\grad \ad{q}_{ik}} + \ip{\B^*\Wn\B v_{ik}}{\ad{q}_{ik}}\big]\\
  &+ \sum_{i = 1}^\Nd \sum_{k = 1}^\Ntr\big[\ip{\ad{y}_{ik} \Exp{m_i}\grad v_{ik}}{\grad p_i} + \cip{\ad{y}_{ik}}{y_{ik}}
  + \ip{\ad{y}_{ik}\Exp{m_i}\grad u_i}{\grad q_{ik}}\\
  &\qquad\qquad+ \ip{\ad y_{ik} y_{ik} \Exp{m_i} \grad u_i}{\grad p_i} - \ip{z_k}{\ad{y}_{ik}}\big].
\end{alignat*}
The variables $(u_i, m_i, p_i) \in \Vg\times \CM\times \V$, for $i \in
\{1, \ldots,\Nd\}$, and $(v_{ik}, q_{ik}, y_{ik})\in \V\times \V \times
\CM$, with $(i, k) \in \{1,
\ldots, \Nd\} \times \{1, \ldots, \Ntr\}$ are the \emph{OED state
  variables}.
The \emph{OED adjoint variables}
$\ad{u}_i, \ad{m}_i, \ad{p}_i, \ad{v}_{ik}, \ad{q}_{ik}$, and
$\ad{y}_{ik}$ belong to the
test function spaces corresponding to their state counterparts.

The gradient for~\eqref{equ:outeropt} is given by the derivative of
$\LOED$ with respect to the weight vector $\vec w$, provided that
variations of $\LOED$ with respect to the OED state and adjoint variables vanish. 
The weight vector
enters the Lagrangian through the weight matrix 
$\Wn = \sum_{j = 1}^\Ns w_j \mat{E}_j$, where  $\mat{E}_j =
\sigma_j^{-2} \vec{e}_j \vec{e}_j^T$. (Here $\vec{e}_j$ denotes the $j$th 
standard basis vector in $\R^\Ns$.) 
Using this notation, it is straightforward to compute derivatives of
the Lagrangian function with
respect to $w_j$, the $j$th component of the weight vector $\vec w$:
\[
\LOED_{w_j} = \sum_{i = 1}^\Nd \ip{\B^* \mat{E}_j (\B u_i - \obs_i)}{ \ad{p}_i} + 
\sum_{i=1}^\Nd \sum_{k = 1}^\Ntr \ip{\B^* \mat{E}_j \B v_{ik}}{\ad{q}_{ik}}, \quad \mbox{ for } j = 1, \ldots, \Ns.
\]
Recalling the definition of $\mat{E}_j$ and using a vector form for
the gradient, we obtain
\begin{equation}\label{equ:gradw}
\hat\obj' = 
    \sum_{i = 1}^\Nd \ncov^{-1}(\B u_i - \obs_i) \odot \B\ad{p}_i + 
    \sum_{i = 1}^\Nd \sum_{k = 1}^\Ntr \ncov^{-1} \B v_{ik} \odot \B \ad{q}_{ik},
\end{equation}
provided
appropriate state and adjoint equations are satisfied. These
equations are computed next.

Requiring that variations of $\LOED$ with respect to the 
OED adjoint variables vanish, we recover the OED state
  equations \eqref{equ:state}--\eqref{equ:incgrad}.
The variables $\ad{p}_{ik}$ and $\ad{q}_{ik}$ are defined through
adjoint equations, obtained by requiring that variations of
$\LOED$ with respect to the OED state variables vanish. That is, for
each $i \in \{1, \ldots, \Nd\}$ and $k \in \{1, \ldots, \Ntr\}$,
   \begin{align}
   \LOED_{v_{ik}}[\ut{v}] &= \ip{\B^*\Wn\B\ut{v}}{\ad{q}_{ik}} + \ip{\ad{y}_{ik} \Exp{m_i}\grad \ut{v}}{\grad p_i} + \ip{\Exp{m_i}\grad\ut{v}}{\grad \ad{v}_{ik}} = 0,
   \label{equ:outer-adj1}
   \\
   \LOED_{q_{ik}}[\ut{q}] &= \ip{\Exp{m_i} \grad \ut q}{\grad \ad q_{ik}} \!+\! \ip{\ad{y}_{ik} \Exp{m_i}\grad u_i}{\grad \ut{q}} \!=\! 0,
   \label{equ:outer-adj2}
   \\
   \LOED_{y_{ik}}[\ut{y}] &= \ip{\ut{y} \Exp{m_i}\grad p_i}{\grad \ad{q}_{ik}}
   \!+\! \cip{\ad{y}_{ik}}{\ut{y}} \!+\! \ip{\ut y \ad y_{ik} \Exp{m_i} \grad u_i}{\grad p_i}
   \!+\! \ip{\ut{y} \Exp{m_i}\grad u_{i}}{\grad \ad{v}_{ik}}\nonumber 
   \\&\tab\tab\!+\!\frac1{\Nd\Ntr}\ip{z_k}{\ut{y}} \!=\! 0,
   \label{equ:outer-adj3}
   \\
   \LOED_{u_i}[\ut u] &= 
               \ip{\B^*\Wn\B\ut u}{\ad p_i}
               \!+\! \ip{\ad{m}_i \Exp{m_i}\grad \ut{u}}{\grad p_i}
               \!+\! \ip{\Exp{m_i} \grad \ut{u}}{\grad \ad{u}_i}
               \!-\! \ip{b^{(1)}_i}{\ut u} \!=\! 0, 
   \label{equ:outer-adj4}
   \\ 
   \LOED_{m_i}[\ut m] &= 
               \ip{\ut{m}\Exp{m_i}\grad{p}_i}{\grad\ad{p}_i}
               \!+\! \cip{\ad{m}_i}{\ut{m}} \!+\! \ip{\ut m \ad{m}_i \Exp{m_i}\grad u_i}{\grad p_i}
               \nonumber\\
               &\tab\tab\!+\! \ip{\ut{m}\Exp{m_i}\grad u_i}{\grad\ad{u}_i}
               \!-\! \ip{b_i^{(2)}}{\ut m} \!=\! 0, 
   \label{equ:outer-adj5}
   \\ 
   \LOED_{p_i}[\ut p] &= 
               \ip{\Exp{m_i} \grad \ut{p}}{\grad \ad{p}_i}
               \!+\! \ip{\ad{m}_i \Exp{m_i}\grad u_i}{\grad \ut{p}}
               \!-\! \ip{b_i^{(3)}}{\ut p} \!=\! 0,  
   \label{equ:outer-adj6}
   \end{align}
for all $(\ut{v}, \ut{q}, \ut{y}, \ut{u}, \ut{m}, \ut{p}) \in \V \times \V \times \CM \times \V \times \CM \times \V$. 
Here, $b_i^{(1)}$, $b_i^{(2)}$, and $b_i^{(3)}$ are
\begin{equation}\label{equ:rhs-ugly}
\begin{aligned}
   \ip{b^{(1)}_i}{\ut u} = &-\sum_{k = 1}^\Ntr \big[\ip{y_{ik} \Exp{m_i} \grad\ut{u}}{\grad \ad v_{ik}}
  +\ip{\ad{y}_{ik} \Exp{m_i} \grad \ut{u}}{\grad q_{ik}}
  +\ip{\ad y_{ik} y_{ik} \Exp{m_i} \grad \ut u}{\grad p_i}\big],
\\
   \ip{b^{(2)}_i}{\ut m} = &-\sum_{k = 1}^\Ntr \big[\ip{ \ut{m} \Exp{m_i} \grad v_{ik}}{\grad \ad v_{ik}}
  +\ip{\ut{m} \Exp{m_i} \grad q_{ik}}{\grad \ad{q}_{ik}}
  +\ip{\ut m y_{ik} \Exp{m_i} \grad u_i}{\grad \ad v_{ik}}
\\
 &\tab\tab\tab+\ip{\ut m y_{ik} \Exp{m_i} \grad  p_i}{\grad \ad q_{ik}}
   +\ip{\ut m \ad y_{ik} \Exp{m_i} \grad v_{ik}}{\grad p_i}
   +\ip{\ut m \ad y_{ik} \Exp{m_i} \grad u_i}{\grad q_{ik}}
\\
  &\tab\tab\tab+\ip{\ut m \ad y_{ik} y_{ik} \Exp{m_i} \grad u_i}{\grad p_i}\big],
\\
   \ip{b^{(3)}_i}{\ut p} = &-\sum_{k = 1}^\Ntr \big[\ip{y_{ik}\Exp{m_i}\grad\ut{p}}{\grad\ad{q}_{ik}}
  +\ip{\ad{y}_{ik} \Exp{m_i}\grad v_{ik}}{\grad \ut{p}}
  +\ip{\ad y_{ik} y_{ik} \Exp{m_i} \grad u_i}{\grad \ut p}\big].
\end{aligned}
\end{equation}
Upon inspecting the OED adjoint
equations~\eqref{equ:outer-adj1}--\eqref{equ:outer-adj6} and comparing
them to the system of
equations~\eqref{equ:state}--\eqref{equ:incgrad}, we notice that the
OED adjoint equations inherit structure from the OED state
equations. Specifically, notice that after rearranging and identifying
terms, the system~\eqref{equ:outer-adj1}--\eqref{equ:outer-adj3} for
$(\ad{q}_{ik}, \ad{v}_{ik}, \ad{y}_{ik})$ is the same as the system
\eqref{equ:incstate}--\eqref{equ:incgrad}, except for the
right hand sides, which coincide up to a constant. This reveals the
following relations:
\begin{equation*}%
    \ad{q}_{ik} = -\frac{1}{\Nd\Ntr}v_{ik}, \quad \ad{y}_{ik} =
    -\frac{1}{\Nd\Ntr}y_{ik}, \quad \ad{v}_{ik} =
    -\frac{1}{\Nd\Ntr}q_{ik},
\end{equation*}
for $i \in  \{1, \ldots, \Nd\}$ and $k \in \{1, \ldots, \Ntr\}$.
Thus, the OED adjoint variables $\ad{q}_{ik}$, $\ad{y}_{ik}$, and $\ad{v}_{ik}$ 
can be eliminated from the system
and the right hand
sides $b^{(1)}_{ik}, b^{(2)}_{ik}, b^{(3)}_{ik}$ 
defined in~\eqref{equ:rhs-ugly} simplify, and result in \eqref{equ:rhs-nice}.

\section{Discretization and computational details}\label{appdx:discretization}
We use a finite-element discretization of the parameter field and the
state and adjoint variables, and we denote by 
boldfaced letters the discretized versions of the variables and
operators appearing in the expressions. 
Next, we describe the numerical computation of the OED objective
function in~\eqref{equ:outeropt}
and of its gradient, where we again consider that $\upgamma = 0$. The
discrete OED function is
\begin{equation}\label{equ:oed-obj}
\hat\obj_h(\vec{w}) = \frac{1}{\Nd\Ntr} \sum_{i = 1}^\Nd \sum_{k = 1}^\Ntr \mip{\vec{z}_k}{\vec{y}_{ik}}.
\end{equation}
Note that, to discretize the infinite-dimensional Hilbert space, we use a
mass-weighted inner product in \eqref{equ:oed-obj}. This is necessary
since the finite-dimensional inference parameters are the coefficients
of the finite element approximation, and helps to ensure that the discrete
problems are appropriate discretizations of 
the infinite-dimensional problem.
We rely on a Gaussian trace estimator and let  
$\vec{z}_k = \mat{M}^{-1/2} \vec{\nu}_k$, $k = 1, \ldots, \Ntr$,
where $\vec{\nu}_k$ are draws from $\GM{\vec{0}}{\vec{I}}$.
See~\cite{AlexanderianPetraStadlerEtAl14} for a justification of the
form of the mass-weighted trace estimator and also
an efficient procedure for computing the application of
$\mat{M}^{-1/2}$ to a vector.

For a given design $\vec{w}$ and data samples $\obs_i$, $i \in \{1, \ldots, \Nd\}$, we solve the inner optimization 
problem~\eqref{equ:state}--\eqref{equ:grad} for the MAP point
$\dpar_i = \dparmap(\vec{w}; \obs_i)$; we also evaluate the state $\vec{u}_i$ and adjoint $\vec{p}_i$ 
variables (for the inner optimization) at the MAP point. Next, we need to solve for $\vec{y}_{ik}$ and the variables $\vec{v}_{ik}$ and $\vec{q}_{ik}$ in~\eqref{equ:incstate}--\eqref{equ:incgrad}.
This is accomplished by solving a linear system of the following block form 
\newcommand{\Luu}{\mat{D}}
\newcommand{\Lum}{\mat{S}^T}
\newcommand{\Lmu}{\mat{S}}
\newcommand{\Lmm}{\mat{Q}}
\newcommand{\Lmmhat}{\widehat{\Lmm}}

\begin{equation} \label{equ:KKTy-fd}
\begin{bmatrix}
\Luu & \Lum & \mat{A}^T\\
\Lmu & \Lmm   & \mat{C}^T\\  
\mat{A}      & \mat{C}      & \mat{0}  
\end{bmatrix}
\begin{bmatrix}
\vec{v}_{ik}\\
\vec{y}_{ik}\\
\vec{q}_{ik}
\end{bmatrix}
=
\begin{bmatrix}
\vec{0} \\
\vec{z}_k \\
\vec{0}
\end{bmatrix}. 
\end{equation}
In the above system, $\Luu = \mat{B}^T \Wn \mat{B}$, where $\mat{B}$ is the
discretization of the observation operator $\B$. The remaining blocks in the system are 
discretizations of the differential operators appearing
in~\eqref{equ:incadjoint}--\eqref{equ:incstate}, evaluated at
$(\vec{u}_i, \dpar_i, \vec{p}_i)$; we refer to
\cite{PetraStadler11} for more details on the discretization of the
Hessian system for an inverse coefficient problem
with an elliptic PDE.
To solve the system~\eqref{equ:KKTy-fd}, we first block eliminate $\vec{v}_{ik}$ and $\vec{q}_{ik}$, namely
\begin{align*}%
    \vec{v}_{ik} = -\mat{A}^{-1}\mat{C} \vec{y}_{ik}, \quad
    \vec{q}_{ik} = -\mat{A}^{-T} (\Luu \vec{v}_{ik} + \Lum \vec{y}_{ik}),
\end{align*}
for $i \in \{1, \ldots, \Nd\}$ and $k \in \{1, \ldots, \Ntr\}$, and solve $\mat{H} \vec{y}_{ik} = \vec{z}_{k}$
with 
\begin{equation}\label{equ:reduced-hess}
\mat{H} = \mat{C}^T \mat{A}^{-T} (\Luu \mat{A}^{-1} \mat{C} - \Lum) 
             - \Lmu \mat{A}^{-1} \mat{C} 
             + \Lmm.
\end{equation}
Once $\vec{y}_{ik}$ is available for $i \in \{1, \ldots, \Nd\}$ and $k \in \{1, \ldots, \Ntr\}$,
we can compute the OED objective function~\eqref{equ:oed-obj}.

To compute the gradient we also need the OED adjoint 
variables $\adfd{p}_i$, $i = 1, \ldots, \Nd$, which are computed
by solving a linear system similar to~\eqref{equ:KKTy-fd}, for $(\adfd{p}_i, \adfd{m}_i, \adfd{u}_i)$, where the
blocks in the system right hand side are replaced by  
$\vec{b}^{(1)}_i, \vec{b}^{(2)}_i$, and $\vec{b}^{(3)}_i$ which are discretizations of the expressions in~\eqref{equ:rhs-nice}.
\commentout{
The operators appearing in the right-hand-side are defined as follows:
The matrix $\mat{A}_{\alpha}$ is the discretiztion of the operator,
\begin{equation*}%
    \A_{\alpha}[u] = -\grad \cdot \big(\alpha \Exp{m} \grad u\big),
\end{equation*}
and $\mat E$ and $\mat F_i, i = \{1,2\}$ are the discretization of the operators
\begin{equation*}%
\begin{aligned}
        \E[v] &= \Exp{m}\grad v \cdot \grad q \\
        \F_1[y] &= y \Exp{m} \grad u \cdot \grad q \\
        \F_2[y] &= y \Exp{m} \grad v \cdot \grad p.
\end{aligned}
\end{equation*}
}
Thus, we solve $\mat{H} \adfd{m}_i = \bar{\vec{b}}_i$,  
where $\mat{H}$ is as in~\eqref{equ:reduced-hess}, and $\bar{\vec{b}}_i$ is given by
\begin{equation*}%
\bar{\vec{b}}_i = \vec{b}^{(2)}_i 
                - \mat{C}^T \mat{A}^{-T}\vec{b}^{(1)}_i 
                - \Lmu\mat{A}^{-1} \vec{b}^{(3)}_i
                + \mat{C}^T \mat{A}^{-T}\Luu \mat{A}^{-1} \vec{b}^{(3)}_i,  
\end{equation*}
for $i \in \{1, \ldots, \Nd\}$. 
Next, we solve for $\adfd{p}_i$, 
\begin{equation*}%
\adfd{p}_i = \mat{A}^{-1} (\vec{b}^{(3)}_i - \mat{C} \adfd{m}_i), \quad i \in \{1, \ldots, \Nd\}.
\end{equation*}
Subsequently, we have all the quantities required in the expression
for the (discretized) gradient:
\begin{equation*}%
    \nabla \hat \obj_h(\vec{w}) = \sum_{i = 1}^\Nd \ncov^{-1}(\mat{B} \vec{u}_i - \obs_i) \odot \mat{B}\adfd{p}_i - 
    \frac{1}{\Nd\Ntr} \sum_{i = 1}^\Nd \sum_{k = 1}^\Ntr \ncov^{-1}\mat{B} \vec{v}_{ik} \odot \mat{B} \vec{v}_{ik}. 
\end{equation*}

\bibliographystyle{siam}

\end{document}